\theoremstyle{plain}
\newtheorem{theorem}{Theorem}[section]
\newtheorem{lemma}[theorem]{Lemma}
\newtheorem{prop}[theorem]{Proposition}
\newtheorem{cor}[theorem]{Corollary}
\newtheorem{conj}[theorem]{Conjecture}
\newtheorem{claim}[theorem]{Claim}
\theoremstyle{definition}
\newtheorem{definition}[theorem]{Definition}
\newtheorem{remark}[theorem]{Remark}
\numberwithin{equation}{section}
\newcommand{\pol}{x\mathbb{Q}[x]}
\title{Sums, products, and exponents in two-colorings of the naturals}
\author{Ryan Alweiss}
\address{Department of Pure Mathematics and Mathematical Statistics and Trinity College, University of Cambridge, Cambridge, UK}
\email{ra699@cam.ac.uk}
\author{Matthew Bowen}
\address{Mathematical Institute, University of Oxford, Oxford, UK}
\email{ matthew.bowen@maths.ox.ac.uk}
\author{Marcin Sabok}
\address{Department of Mathematics and Statistics, McGill University, Montreal, Canada}
\email{marcin.sabok@mcgill.ca}
\date{}
\begin{document}

\maketitle

\begin{abstract}
    We prove that for any coloring of the naturals using two colors there are monochromatic sets of the form $\{x,y,xy,x+iy:i\leq k\}$ and $\{x,y,x^y,xy^i:i\leq k\}$ for any $k$.
\end{abstract}

\section{Introduction}

In this paper we are interested in the following well-known conjecture of Hindman.

\begin{conj}[\cite{hindman.conjecture}]\label{hindman conjecture}
    Any finite coloring of $\mathbb{N}$ contains monochromatic sets of the form $\{x,y,xy,x+y\}.$
\end{conj}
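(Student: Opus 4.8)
Since the statement in question is Conjecture~\ref{hindman conjecture} itself, the plan is necessarily a program rather than a routine argument: establish partition regularity of the pattern $\{x,y,xy,x+y\}$ for an arbitrary finite coloring $c\colon\mathbb{N}\to\{1,\dots,r\}$. I would split this into three stages. Stage~1: find one color class containing $x$, $x+y$ and $xy$ for some $x,y$, forgetting the color of $y$. Stage~2: arrange, in addition, that $y$ lands in the same class. Stage~3: pass from $r=2$ to general $r$. I expect Stage~3 to be the genuine obstacle; it is precisely what keeps the conjecture open.

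For Stage~1 the right framework is infinitary Ramsey theory in $\beta\mathbb{N}$. Fix a multiplicative idempotent $p$ in the smallest two-sided ideal of $(\beta\mathbb{N},\cdot)$; then any $c$-monochromatic set $A\in p$ is multiplicatively central, so it contains full multiplicative IP-structure, and in particular geometric-progression-like blocks $\{a,ab,ab^2,\dots\}$. One then wants to upgrade such a block by forcing an additive shift $x+y$ into the same cell. This is exactly Moreira's theorem that $\{x,x+y,xy\}$ is partition regular, which I would invoke directly; if a self-contained treatment were required I would redo it by a PET-type double induction, layering van der Waerden / Rado on the exponents of the geometric block against a density increment in the additive direction, which is the technically heaviest part of Stage~1.

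Stage~2 is where $\{x,y,xy,x+y\}$ differs essentially from $\{x,x+y,xy\}$: nothing so far controls the color of the free parameter $y$. The natural device is to first extract, by Hindman's finite-sums theorem, a monochromatic $\mathrm{FS}(\langle z_n\rangle)$, and then run the Stage~1 search with $y$ constrained to lie inside this IP-set (or inside a multiplicatively rich subset of it), so that $y$ inherits the right color automatically. The difficulty is making the multiplicative/dynamical argument of Stage~1 respect the prescribed additive structure of the $z_n$ at the same time. For $r=2$ there is a decisive simplification, since "$y$ has the wrong color" forces an entire Schur/Folkman configuration ($2y,3y,\dots$ together with $y,y^2,y^3,\dots$) into the unique other color, and a second application of the above in that color closes the argument; this is broadly the route taken for the $2$-color results in the literature and in this paper.

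The main obstacle is Stage~3. The $2$-color arguments crucially use that the complement of one color \emph{is} the other color, so ruling out one color for $y$ hands you all the derived additive and multiplicative structure of $y$ for free in the complementary class. With $r\ge 3$ this dichotomy disappears, and there is at present no mechanism to pin the color of $y$ while simultaneously controlling $x$, $x+y$ and $xy$. I would therefore expect this plan to yield, unconditionally, only the $r=2$ case, together with a conditional reduction of the full conjecture to a ``colored'' strengthening of Moreira's theorem — that $\{x,x+y,xy\}$ can always be found with $y$ lying in any prescribed IP-set — whose unconditional proof is the open heart of the problem.
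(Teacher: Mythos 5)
The statement you were asked about is Conjecture~\ref{hindman conjecture} itself, which is open; the paper states it without proof and only establishes the two-color case (and variants). Your proposal correctly recognizes this and is honest that it is a program, not a proof, so there is no gap to fault you for in the usual sense: nobody can currently supply what Stage~3 demands. Your Stage~1/Stage~2 decomposition for the $r=2$ case tracks the paper's announced strategy closely --- first find $\{x,xy,x+y\}$ ignoring the color of $y$ via Moreira's theorem, then use a structure-vs-randomness dichotomy to force $y$ into the right class, exploiting that with two colors the complement of one class is the other. One refinement worth noting: the dichotomy the paper actually runs is not quite ``$y$ in the wrong color forces a Schur/Folkman configuration into the other class,'' but rather a trichotomy on the color classes as subsets of $(\mathbb{N},\cdot)$ --- both multiplicatively thick, both multiplicatively syndetic, or one both thick and syndetic --- with Proposition~\ref{s+c} handling syndetic classes (via multiplicative centrality and a quotient-coloring trick that transfers the color of $y$ to $x$) and Lemma~\ref{all thick} handling the all-thick case. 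Your conditional reduction in Stage~3 (Moreira's theorem with $y$ constrained to a prescribed IP or central set) is essentially Proposition~\ref{central*} of the paper, which shows the set of admissible $y$ is (multiplicatively central)$^*$; the remaining obstruction for $r\ge 3$ is exactly where you locate it, namely that thickness of all classes no longer yields a usable complementary structure.
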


Despite its simplicity, Hindman's conjecture has remained recalcitrant for decades, and even special cases and relaxations of it have been the subject of much recent interest \cite{shkredov,cilleruelo2012combinatorial,green2016monochromatic,moreira2017monochromatic,bowen2025monochromatic,bowen.sabok,alweiss2022monochromatic,alweiss2023monochromatic,kousek2024revisiting,richter2025sums,green2025bounds}.  

Most relevant to the present paper, Moreira \cite{moreira2017monochromatic} has shown that any finite coloring of $\mathbb{N}$ contains monochromatic sets $\{x,xy,x+y\}$, i.e., Hindman's conjecture is true if we do not require that the step size $y$ is also the desired color, and the first named author has recently given an alternative proof \cite{alweiss2022monochromatic}.  The second author \cite{bowen2025monochromatic} has shown that Hindman's conjecture is true for colorings of $\mathbb{N}$ into two colors, the second and the third authors  have shown \cite{bowen.sabok} that Hindman's conjecture holds for arbitrary finite colorings of $\mathbb{Q}$ and the first author has shown \cite{alweiss2023monochromatic} that the general version of the Hindman conjecture with more than two variables also holds for arbitrary finite colorings of $\mathbb{Q}$.

In the present paper we return to the two-color case analyzed in \cite{bowen2025monochromatic}. In particular, we give a simpler proof of the main result from \cite{bowen2025monochromatic} that generalizes to deal with more complicated configurations.  Our main result is the following, which was previously only known in the case $k=1.$

\begin{theorem}\label{t1}
    For any $k\in \mathbb{N}$ and $2$-coloring of $\mathbb{N},$ there exist $x,y\in\mathbb{N}$ such that $\{x,y,xy, x+iy: i\leq k\}$ is monochromatic. 
\end{theorem}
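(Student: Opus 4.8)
\medskip

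The plan is to split the target configuration $\{x,y,xy\}\cup\{x+iy:i\le k\}$ into its linear part $\{y,x,x+y,\dots,x+ky\}$, which can be produced by a classical Ramsey-theoretic tool, and the lone nonlinear constraint that $xy$ carry the same color, which is where the two-color hypothesis must do real work. For the linear part I would invoke Brauer's theorem (a special case of Rado's theorem): for every finite coloring of $\mathbb{N}$ and every $\ell$ there are $a,d$ with $\{d,a,a+d,\dots,a+\ell d\}$ monochromatic. Since multiplication by a fixed $N$ is an injective endomorphism of $(\mathbb{N},+)$, applying this to the coloring $n\mapsto c(Nn)$ gives such a configuration with $a$ and $d$ divisible by $N$. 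Setting $x=a$, $y=d$ and $\ell=k$, this already makes $\{x,y,x+y,\dots,x+ky\}$ monochromatic of some color $\chi$, with $x,y$ divisible by any $N$ we like (so that $xy$ is divisible by $N^{2}$); all that remains is to arrange $c(xy)=\chi$.

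To control the product I would not apply Brauer's theorem to $\mathbb{N}$ blindly but inside a set pre-arranged so that the forced product term lands in a monochromatic region. A natural scaffold is the multiplicative structure on the powers of a fixed prime $p$: applying Schur's or Folkman's theorem to the induced two-coloring $m\mapsto c(p^{m})$ of the exponents produces a large monochromatic multiplicatively structured set $P$ of powers of $p$, of one color $\psi$, such that suitable products of elements of $P$ remain of color $\psi$. Running the Brauer color-focusing argument on (a semigroup built from) $P$ rather than on $\mathbb{N}$ should force the product $xy$ into this already-$\psi$-colored region; a pigeonhole over the two colors then lets us identify $\psi$ with the Brauer color $\chi$, completing the argument. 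The same scheme, with the scaffold adapted to exponentiation, is what I would expect to also yield the companion statement about $\{x,y,x^{y},xy^{i}:i\le k\}$.

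The main obstacle is exactly the interaction of the additive and multiplicative structures. Because $xy$ is nonlinear one cannot freely translate or dilate a Brauer configuration to move its product term into a controlled set: dilating $(x,y)\mapsto(\lambda x,\lambda y)$ scales the progression $x+iy$ by $\lambda$ but the product by $\lambda^{2}$. Making the multiplicative scaffold and the color-focusing compatible — so that the progression we extract still has its product term where we want it, uniformly in $k$ — is the delicate step, and it is precisely here that having only two colors should be essential, since the analogous bookkeeping for more colors would settle Hindman's conjecture. A little extra care is also needed for the smallest numbers (in particular the color of $1$) and to guarantee $x\ne y$, so that the configuration really has the stated form.
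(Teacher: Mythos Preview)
Your proposal is not a proof but a plan, and the gap you yourself flag is exactly the point where the argument has to happen. Brauer's theorem hands you a monochromatic $\{y,x,x+y,\dots,x+ky\}$ with no control whatsoever over the color of $xy$; your suggestion to regain that control by running the Brauer color-focusing ``inside a semigroup built from $P$'' does not work as stated, because the multiplicative scaffold you build from Folkman's theorem on exponents --- a monochromatic product set inside the powers of a prime --- carries no additive structure at all, so there is nothing to run Brauer on. Conversely, if you run Brauer first and then try to dilate or translate into $P$, the nonlinear term $xy$ scales differently from the linear ones, as you note, and nothing in your outline explains how two colors save you here. The sentence ``a pigeonhole over the two colors then lets us identify $\psi$ with $\chi$'' is where the proof would need to be, and it is empty: if $x,y\in P$ then $x,y$ are already color $\psi$, so any monochromatic Brauer configuration through them is forced to be color $\psi$, and the whole burden becomes making $x+iy$ color $\psi$ for $x,y$ powers of $p$ --- which is the original problem again.

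The paper's two proofs use the two-color hypothesis in a concrete way that your outline lacks. The first proof arranges, via the $\mathrm{IP}^*$ polynomial van der Waerden theorem, long polynomial progressions in \emph{both} colors with a \emph{common} step $d$, and then runs a short case analysis on how the colors are distributed on the set $\{p(d):p\in P\}$ for a carefully chosen family $P$ of monomials: either some mixed-color pattern among $p(d),q(d),p(d)q(d)$ occurs and one finishes in three lines, or the set $\{ie^j:i\le 2k+1,\ j\le 2\}$ is monochromatic for some $e$ and one takes $x=e$, $y=2e$. The second proof uses the dichotomy that in a $2$-coloring either both classes are multiplicatively syndetic or both are multiplicatively thick (or one is both), and handles each case separately via a strengthening of Moreira's theorem. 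In both proofs the leverage from ``two colors'' is an explicit structural dichotomy, not a pigeonhole at the end; your proposal would need to supply something of that kind to close the gap.
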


In addition to proving Theorem \ref{t1}, a primary goal of this paper is to highlight that its basic proof strategy is fairly robust and allows for many alterations and adaptations.  Indeed, the second author has recently used a similar strategy to give a new proof and generalizations of the non-commuting Schur theorem for finite colorings of amenable groups \cite{bowen.non.commuting}.  The basic strategy used in the proof of Theorem \ref{t1} and the related results \cite{bowen2025monochromatic,bowen.sabok,bowen.non.commuting} is essentially as follows:

\begin{enumerate}
    \item First, show that the result holds when the color of the step size $y$ is ignored.  In this case, we are interested in finding monochromatic sets $\{x,xy,x+iy: i\leq k\},$ which can be done either through Moreira's work \cite{moreira2017monochromatic} or the first author's \cite{alweiss2022monochromatic}.

    \item Define an appropriate structure vs randomness dichotomy, and argue that a $2$-coloring must either be structured or random.

    \item Argue that in either case the extra information we obtain can be used to upgrade the proof from step (1) to control the color of the $y$ term as well. %\footnote{Of course, we would also desire a step (4), where we prove a more general version of the structure vs randomness dichotomy that holds for more colors.  This was carried out for the monochromatic $\{x,y,xy,x+y\}$ problem in $\mathbb{Q}$ in \cite{bowen.sabok}.  The main obsticle for carrying out this strategy in $\mathbb{N}$ seems to come from the `random case,' where all color classes are thick.  This can be handled for $2$-colorings of $\mathbb{N}$, see Lemma \ref{all thick}, but we do not even know how to deal with the random case for $3$-colorings of $\mathbb{N}$}
\end{enumerate}

In order to illustrate this technique we give two proofs of Theorem \ref{t1} based on the two ways of handling step (1) and using two different structure vs randomness dichotomies.  These are presented in the two subsections of Section \ref{section +}.

We also show that the same basic approach can be adapted to prove the following exponential version of the result.

\begin{theorem}\label{t2}
    For any $k\in \mathbb{N}$ and any $2$-coloring of $\mathbb{N},$ there exist $x,y\in\mathbb{N}$ with $\{x,y,x^y, xy^i: i\leq k\}$ monochromatic. 
\end{theorem}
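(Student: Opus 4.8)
The plan is to follow the same three-step strategy used for Theorem~\ref{t1}, but with multiplication taking the place of addition and exponentiation taking the place of multiplication. Since $(\mathbb{N},\cdot)$ is not singly generated, the first step is to transfer to the multiplicative sub-semigroup generated by a prime $p$: given a coloring $\chi$ of $\mathbb{N}$, form the induced coloring $\chi'(n):=\chi(p^{n})$ on $\mathbb{N}$. If one searches only for $x,y$ of the form $x=p^{a}$ and $y=p^{b}$, then $xy^{i}=p^{a+ib}$ and $x^{y}=p^{\,a\cdot p^{b}}$, so a monochromatic configuration $\{x,y,x^{y},xy^{i}:i\le k\}$ for $\chi$ is precisely a monochromatic configuration $\{a,\ b,\ a\cdot p^{b},\ a+ib:i\le k\}$ for $\chi'$. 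Hence it is enough to prove the analogue of Theorem~\ref{t1} in which ordinary multiplication $a\cdot b$ is replaced by the twisted operation $a\oslash b:=a\cdot p^{b}$.

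For step (1) one needs the version with the color of the step size ignored: for any finite coloring of $\mathbb{N}$, there are $a,b$ with $\{a,\ a\oslash b,\ a+ib:i\le k\}$ monochromatic. I would obtain this from the methods of Moreira~\cite{moreira2017monochromatic} or of the first author~\cite{alweiss2022monochromatic}, which produce monochromatic patterns of the form $\{x,\ \varphi(x,y),\ x+iy:i\le k\}$ for suitable dilation-type $\varphi$; the map $\varphi(x,y)=x\cdot p^{y}$ is of the right shape, since for each fixed $y$ it is multiplication by the constant $p^{y}$, and the extra point $a\oslash b=a+a(p^{b}-1)$ is, just as the point $ab=a+a(b-1)$ in Moreira's configuration, a multiplicative dilate of $a$. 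For step (2) I would set up, for a $2$-coloring, the structure-versus-randomness dichotomy for $\chi'$ exactly as in Section~\ref{section +}: either $\chi'$ is controlled by a bounded-modulus structure, or the relevant configuration counts are quasirandom. For step (3) I would upgrade step (1) in each case so as also to control $\chi'(b)$ (equivalently $\chi(y)$): in the structured regime one picks $a,b$ both divisible by the relevant moduli (in particular by the multiplicative order of $p$), so that $p^{b}\equiv 1$ and $a+ib\equiv a$ and all $k+3$ points lie in the same residue class; in the quasirandom regime one counts pairs $(a,b)$ in a box $[N]^{2}$ for which each of $a$, $b$, $a\oslash b$ and $a+ib$ ($i\le k$) receives the majority color.

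The hard part will be steps (2) and (3) together with the fact that the transfer forces us to work with $\oslash$ rather than with honest multiplication: $\oslash$ is non-commutative and only one-sidedly distributive over $+$, so one has to check that the structure-vs-randomness machinery of Section~\ref{section +} really is robust enough to survive the substitution. Concretely, in the quasirandom regime one must verify that the very sparse and irregular set $\{a\cdot p^{b}\}$ still behaves well enough for the double-counting to close, and in the structured regime one must re-run the residue-class analysis with the additional constraint imposed by the order of $p$ modulo the structure's modulus. I expect no genuinely new combinatorial input is needed beyond this robustness check, which is presumably why the authors describe it as an adaptation of the same approach.
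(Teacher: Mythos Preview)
Your reduction step is exactly right and matches the paper's Lemma~\ref{log reduction}: passing to the induced coloring $\chi'(n)=\chi(p^{n})$ converts the pattern $\{x,y,x^{y},xy^{i}\}$ into $\{a,b,a\cdot p^{b},a+ib\}$. Your step~(1) is also on target: the paper indeed proves an exponential Moreira theorem (Proposition~\ref{IP*}) showing that the set of admissible $b$ for which $\{a,a\cdot p^{b},a+qb\}$ can be made monochromatic is additively IP$^{*}$, obtained by re-running Moreira's iteration with dilation by $2^{y_{i}}$ in place of dilation by $y_{i}$.

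The genuine gap is in your steps~(2) and~(3). The dichotomy you describe --- ``bounded-modulus structure'' versus ``quasirandom configuration counts in a box $[N]^{2}$'' --- is not the dichotomy used in Section~\ref{section +}, and it is not clear it can be made to work here. The paper's structure/randomness split is the \emph{multiplicatively syndetic versus multiplicatively thick} dichotomy (as in Section~\ref{sec:3.2}), not a Fourier or arithmetic-regularity decomposition. Your proposed quasirandom case would require counting monochromatic tuples containing the point $a\cdot p^{b}$, which for $(a,b)\in[N]^{2}$ ranges over an interval of length $N\cdot p^{N}$; there is no box to count in, and no density argument of the kind you sketch is known to close. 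Your structured case (``pick $b$ divisible by the order of $p$ modulo $m$'') presupposes that the coloring is literally periodic, which is far stronger than what any available regularity statement would give you.

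What the paper actually does after the reduction is: (i) handle the case where one color class is multiplicatively syndetic directly via Proposition~\ref{syndetic}, using that multiplicatively syndetic sets are additively IP together with Proposition~\ref{IP*}; (ii) in the remaining case both classes are multiplicatively thick, and the paper runs a three-level iterated van der Waerden argument producing progressions $a+Q_{1}d_{1}+Q_{2}d_{2}+Q_{3}d_{3}\subseteq R$ and $b+Q_{1}d_{1}+Q_{2}d_{2}+Q_{3}d_{3}\subseteq B$ with the $d_{i}$ chosen (by thickness) so that $Q_{1}d_{1}\subseteq R$, $Q_{2}d_{2}\subseteq B$, $Q_{3}d_{3}\subseteq R$, followed by a finite case analysis on the colors of $q_{1}d_{1}+q_{2}d_{2}$ and $q_{2}d_{2}+q_{3}d_{3}$. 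This is not a counting argument at all; it is an explicit combinatorial case split, and the robustness you would need to check is of a completely different character from what you outlined.
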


The $k=1$ case of this result was proven by Sahasrabudhe \cite{sahasrabudhe2018monochromatic}, even for arbitrary finite colorings.  However, the result seems to be new for $k>1.$  Along the way, we also prove that any finite coloring of $\mathbb{N}$ contains monochromatic sets of the form $\{x,x^y,xy^i: i\leq k\},$ which gives a new exponential version of Moreira's theorem.

\section{Preliminaries}

Given a semigroup $(\Gamma,\cdot)$ and two subsets $S,T\subseteq \Gamma,$ we will denote $$S/T=\{g\in \Gamma: \exists t\in T \textnormal{ with } gt\in S\}.$$

%$\mathcal{P}$ will denote the set of formal polynomials with rational coefficients and $0$ constant term.  

%Hey it's my favorite semi-ring

%\subsection{Notions of size}

We will only use the following definitions with the semigroups $(\mathbb{N},+)$ and $(\mathbb{N},\cdot).$

\begin{definition}

Let $(\Gamma,\cdot)$ be a semigroup.
    \begin{itemize}
        \item A subset $S\subseteq \Gamma$ is \textbf{syndetic} if there is a finite set $F\subset \Gamma$ such that $\Gamma=S/F.$

        \item A subset $T\subseteq \Gamma$ is \textbf{thick} if for every finite set $F\subset \Gamma$ there is a $t\in T$ such that $tF\subset T.$

        \item A subset $A\subset \Gamma$ is \textbf{piecewise syndetic} if there is a thick set $T$ and syndetic set $S$ such that $A=T\cap S.$

        \item Given a family $\mathcal{C}$ of subsets of $\Gamma$ we say that $A\subseteq \Gamma$ is in $\mathcal{C}^*$ if $A$ intersects every member of $\mathcal{C}$.

        \item Given a sequence $S$ of elements of $\Gamma$ we write $FS(S)$ for the set $\{a_{i_1}\cdot\ldots \cdot a_{i_n}:i_1<\ldots<i_n\in\mathbb{N}\}$,

        \item A subset of $\Gamma$ is \textbf{IP} if it contains a set $FS(S)$ for some infinite sequence $S$ of elements of $\Gamma$ and is called \textbf{IP}$_n$ if it contains a set $FS(S)$ for a sequence $S$ of elements of $\Gamma$ of length $n$.
    \end{itemize}

We will call subsets of $(\mathbb{N},+)$ \textbf{additively} syndetic, thick, etc., while subsets of $(\mathbb{N},\cdot)$ will be called \textbf{multiplicatively} syndetic, etc.
\end{definition}

\begin{remark}
    Note that a set is syndetic if and only if its complement is not thick.  Moreover, if we generate a finite coloring of $\mathbb{N}$ via fair dice rolls, then all color classes are both multiplicatively and additively thick with probability one.  For this reason, thick colorings can be viewed as `pseudorandom' colorings, and the complimentary case, where all color classes are syndetic, can be viewed as `pseudostructured.'  This is the main idea behind the proof strategies carried out in Sections \ref{sec:3.2} and \ref{exp}.
\end{remark}
%{\color{purple}The following lemma is not referred to in the sequel. Maybe we can remove it and simply restate it when we use it?}
%{\color{red} I think it's worth mentioning these, epsecially the last one}

We will make use of the following basic facts about these notions, whose proofs are well known and can be found, for example, in \cite{hindman.strauss}.
\begin{lemma}\label{basic}
\
   \begin{itemize}
       \item If $A$ is additively piecewise syndetic, then for any $n\in\mathbb{N}$ so are $nA$ and $A-n.$
       \item If $A$ is piecewise syndetic and $A=A_1\cup...\cup A_r,$ then there is an $i\in [r]$ such that $A_i$ is piecewise syndetic.
       \item If $F\subset \mathbb{N}$ is a finite set and $T$ is multiplicatively thick, then $\{T\in T: tF\subset T\}$ is multiplicatively thick.
       \item If $T$ is multiplicatively thick and $I$ is additively $IP_n^*$ for some $n,$ then $T\cap I$ is multiplicatively thick.
   \end{itemize}
\end{lemma}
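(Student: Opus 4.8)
The plan is to treat the four items separately, as they are of somewhat different natures; all are standard, and the only one carrying real content is the last. For the first two it helps to first record the equivalent description of additive piecewise syndeticity: $A$ is additively piecewise syndetic if and only if $\bigcup_{f\in F}(A-f)$ is additively thick for some finite $F\subset\mathbb N$ (one direction simply unwinds the definition; for the other one uses that a thick set contains arbitrarily long intervals together with a pigeonhole over $F$ on such an interval), equivalently if and only if $\overline A$ meets the smallest ideal $K(\beta\mathbb N,+)$ of the Stone--\v{C}ech compactification.

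For the first item, write $A=S\cap T$ with $S$ additively syndetic and $T$ additively thick. Then $A-n=(S-n)\cap(T-n)$; here $S-n$ is still syndetic (witnessed by the same finite set) and $T-n$ is still thick (apply thickness of $T$ to the finite set $\{n\}\cup(n+F')$ for the given $F'$), so $A-n$ is piecewise syndetic. Since $x\mapsto nx$ does not preserve thickness, for $nA$ I would instead use the $\bigcup_f(A-f)$ description: if $D=\bigcup_{f\in F}(A-f)$ is thick, then $\bigcup_{f\in F,\;0\le r<n}(nA-nf-r)$ is thick, because $x$ lies in this union precisely when $\lceil x/n\rceil\in D$, and one fills an arbitrary interval $[t,t+L]$ by choosing $t=nu$ with $[u,u+\lceil L/n\rceil+1]\subseteq D$. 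For the second item, choose $p$ in $\overline A\cap K(\beta\mathbb N,+)$; since $A=A_1\cup\dots\cup A_r$ belongs to the ultrafilter $p$, some $A_i$ belongs to $p$, whence $p\in\overline{A_i}\cap K(\beta\mathbb N,+)$ and $A_i$ is piecewise syndetic. (A purely finitary argument is also possible but messier.)

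The third item is a one-line computation: reading the set as $\{t\in T:tF\subseteq T\}$, given a finite $G\subset\mathbb N$, multiplicative thickness of $T$ applied to the finite set $(G\cup\{1\})\cdot(F\cup\{1\})$ produces $s$ with $s\cdot(G\cup\{1\})\cdot(F\cup\{1\})\subseteq T$, and then $s$ together with all of $sG$ lies in $\{t\in T:tF\subseteq T\}$. The fourth item is where the work is. Fix a finite $E\subset\mathbb N$; enlarging it we may assume $E=\{1,2,\dots,k\}$, and we must find $s$ with $\{s,2s,\dots,ks\}\subseteq T\cap I$. By the third item, $T_k:=\{s\in T:sE\subseteq T\}$ is multiplicatively thick. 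For each $i\le k$ the quotient $I/i=\{s:si\in I\}$ is again additively $IP_n^*$: for any $b_1,\dots,b_n$ one has $i\cdot FS(b_1,\dots,b_n)=FS(ib_1,\dots,ib_n)$, which $I$ meets, and dividing an element of the intersection by $i$ gives an element of $FS(b_1,\dots,b_n)\cap(I/i)$. The crucial step is that $J:=\bigcap_{i\le k}(I/i)$ is itself $IP_N^*$ for some $N$: if not, then for arbitrarily large $N$ there is a set $FS(b_1,\dots,b_N)$ disjoint from $J$, on which every element $x$ has $xi\notin I$ for some $i\le k$, so $FS(b_1,\dots,b_N)$ may be $k$-coloured by the least such $i$; by the finite version of Hindman's theorem (Folkman--Rado--Sanders), once $N$ is large enough this colouring admits a monochromatic $FS(c_1,\dots,c_n)$, of colour $i_0$ say, and then $FS(c_1,\dots,c_n)$ is disjoint from $I/i_0$, contradicting that $I/i_0$ is $IP_n^*$. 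Finally, using multiplicative thickness of $T_k$ pick $t$ with $\{t,2t,\dots,Mt\}\subseteq T_k$ for some $M\ge N$; since $\{t,2t,\dots,Mt\}$ contains the additive $IP_N$ set $FS(t,\dots,t)$ ($N$ terms), $J$ meets it, say $jt\in J$ with $j\le M$. Then $jt\in T_k$ gives $\{jt,\dots,kjt\}\subseteq T$, and $jt\in J$ gives $\{jt,\dots,kjt\}\subseteq I$, so $s=jt$ works and $T\cap I$ is multiplicatively thick.

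The only genuine obstacle is this last point — that a finite intersection of $IP_n^*$-type sets stays $IP_N^*$ for some $N$. This is a Ramsey-theoretic fact rather than bookkeeping, and the cleanest route to it runs through the finite version of Hindman's theorem; everything else in the lemma is a matter of unwinding the definitions of syndetic, thick, and $IP_n$.
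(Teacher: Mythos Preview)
Your proof is correct. The paper itself does not prove this lemma at all: it simply declares the four facts to be well known and refers the reader to Hindman--Strauss, so there is no argument in the paper to compare yours against.

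Your treatment is sound throughout. The first three items are, as you say, straightforward definition-chasing (using the ultrafilter/$K(\beta\mathbb N,+)$ characterization for partition regularity is the cleanest route, and your one-line computation for the third item is exactly right once the typo $\{T\in T:\dots\}$ is read as $\{t\in T:tF\subseteq T\}$). For the fourth item, which you correctly identify as the only one with genuine content, your argument is the standard one: dilation-invariance of $IP_n^*$ reduces to showing a finite intersection of $IP_n^*$ sets is $IP_N^*$, and Folkman--Rado--Sanders supplies exactly that $N$. The final step---finding $FS(t,\dots,t)=\{t,2t,\dots,Nt\}$ inside $T_k$ by multiplicative thickness and intersecting with $J$---closes the argument cleanly; you may simply take $M=N$ there.
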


We will also need the following version of van der Waerden's theorem, which is a consequence of the polynomial Hales-Jewett theorem.  The only difference with the more standard formulation is that we allow for polynomials with rational coefficients, but this makes no difference in the proof (see, e.g., \cite[Theorem 2.5]{bowen2025monochromatic}).

Below, we use the notation $\pol$ to denote the set of polynomials with rational coefficients and $0$ constant term.  

%{\color{purple} Below I am replacing $\mathcal{P}$ with $\mathbb{Q}[x]$, as this notation is  a bit more intuitive.}

\begin{theorem}[$IP_n^*$ polynomial van der Waerden theorem]\label{vdw}
    For any additively piecewise syndetic set $A\subset \mathbb{N}$ and any finite $P\subseteq \pol$ there is an $n\in \mathbb{N}$ such that for any $S\subset \mathbb{N}$ with $|S|\geq n$ there is $d\in FS(S)$ such that $$A\cap\bigcap_{p\in P}(A-p(d))$$ is additively piecewise syndetic.
\end{theorem}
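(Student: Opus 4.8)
The plan is to deduce Theorem \ref{vdw} from the polynomial Hales--Jewett theorem (in its rational-coefficient formulation, cf. \cite[Theorem 2.5]{bowen2025monochromatic}) combined with a standard piecewise-syndeticity transfer argument. Recall that a set $A$ is additively piecewise syndetic if and only if there is a fixed finite $F_0\subset\mathbb{N}$ such that for every $N$ one can find an interval $[m,m+N]$ entirely contained in $A/F_0$; equivalently, $A$ contains arbitrarily long intervals after translating by at most $\max F_0$. The point is that piecewise syndeticity is exactly the property that survives the combinatorial argument: polynomial van der Waerden configurations can be found inside any one cell of a finite partition of a sufficiently long interval, and ``sufficiently long'' can be taken uniform.

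First I would set up the finite-colouring input. Fix the finite set $P\subseteq\pol$. The rational-coefficient polynomial Hales--Jewett / polynomial van der Waerden theorem gives, for each $r$, an integer $M=M(r,P)$ such that any $r$-colouring of $[M]$ contains $a$ and $d\geq 1$ with $a,\,a+p(d)\in$ the same colour class for all $p\in P$ (clearing denominators so that all $p(d)$ are integers in the relevant range is harmless since the $p$ have zero constant term). Now, given the additively piecewise syndetic $A$ with witnessing finite set $F_0$, I would choose $n$ large enough that $FS(S)$ for any $|S|=n$ is guaranteed to contain an element in a prescribed (large but bounded) range — here one uses that $FS$-sets of long sequences, together with the fact that we only need $d$ in a window of size depending on $M$, can be forced; more precisely, one passes to the standard reformulation where the configuration is sought inside a single long interval and $d$ ranges over an $IP$-set, and then invokes the $IP$-version of polynomial van der Waerden, which is itself a routine consequence of polynomial Hales--Jewett applied to $[k]$-parameter words. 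This is where the $FS(S)$ and the ``$|S|\geq n$'' in the statement come from: the $IP$ structure is built in at the level of Hales--Jewett.

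Next I would run the pigeonhole. Since $A$ is piecewise syndetic, for $N$ as large as we like there is a translate $A-t$ (with $t\leq\max F_0$, actually $t\in F_0$) containing an interval $I_N$ of length $N$. Colour $I_N$ by which of finitely many ``shift classes'' each point falls into — concretely, one tracks the pattern of membership of $j$ in $A-f$ over $f\in F_0$, a colouring with at most $2^{|F_0|}$ colours. Apply the finite polynomial van der Waerden statement above inside $I_N$ with $r=2^{|F_0|}$ colours to extract $a$ and $d\in FS(S)$ (the $IP$ structure coming from the chosen sequence $S$) with $a,a+p(d)$ all in one shift class, i.e. all lying in $A-f$ for a common $f$. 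Then $A\cap\bigcap_{p\in P}(A-p(d))$ contains the translate by $f$ of a long interval's worth of such configurations — and by running this for every $N$ and noting $d$ can be taken from a fixed $IP$-set (so finitely many possible $d$ for the relevant window, hence one value of $d$ works for arbitrarily large $N$ by pigeonhole), one concludes $A\cap\bigcap_{p\in P}(A-p(d))$ contains arbitrarily long intervals after a bounded translate, i.e. is piecewise syndetic. The last step requires care to interchange the quantifiers correctly: one wants a single $d$ that works ``at all scales,'' which is extracted by a compactness/pigeonhole argument on the bounded set of candidate $d$'s coming from a fixed finite initial segment of $S$.

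The main obstacle I expect is precisely this quantifier management: the polynomial Hales--Jewett theorem is naturally a statement about finite colourings of finite combinatorial cubes, and converting its output into ``$A\cap\bigcap_p(A-p(d))$ is \emph{piecewise syndetic}'' (rather than merely nonempty, or merely containing one long interval for one value of $d$) needs the observation that the same bounded list of $d$-candidates recurs across all scales, so that one $d$ serves all of them. Everything else — clearing denominators to handle rational coefficients, passing between the interval picture and the $A/F_0$ picture, and the $IP_n^*$ packaging of the step set — is routine and is exactly the content imported wholesale from \cite[Theorem 2.5]{bowen2025monochromatic} and the standard references such as \cite{hindman.strauss}. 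I would therefore present the proof as: state the finite polynomial vdW input, perform the shift-class pigeonhole, and then carefully extract the uniform $d$.
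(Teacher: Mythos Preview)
The paper does not give its own proof of Theorem~\ref{vdw}; it is quoted as a known consequence of the polynomial Hales--Jewett theorem, with a pointer to \cite[Theorem~2.5]{bowen2025monochromatic} for the rational-coefficient formulation. Your plan --- pass to a bounded shift-class colouring using the syndeticity witness $F_0$, invoke the $IP$-form of polynomial Hales--Jewett to place the step $d$ in $FS(S)$, then upgrade to piecewise syndeticity --- is precisely the route that citation encodes, so at the level of strategy you and the paper agree.

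That said, the part of your sketch you yourself flag as delicate is not actually closed. Applying the finite $IP$ polynomial van der Waerden theorem inside a long interval $I_N$ produces a \emph{single} base point $a_N$ with $\{a_N+p(d_N):p\in P\cup\{0\}\}$ in one shift class; it does not produce ``a long interval's worth of such configurations'' as you assert. Pigeonholing on the finitely many $d\in FS(S)$ therefore only shows $A\cap\bigcap_{p}(A-p(d))$ is infinite for some $d$, not that it is piecewise syndetic. The clean repair is not a compactness on $d$ but rather partition regularity of piecewise syndeticity applied upstairs: write
\[
A=\Bigl(\bigcup_{d\in FS(S)} B_d\Bigr)\cup A',\qquad B_d:=A\cap\bigcap_{p\in P}(A-p(d)),\quad A':=A\setminus\bigcup_{d\in FS(S)} B_d,
\]
and note that if no $B_d$ is piecewise syndetic then $A'$ is; but every piecewise syndetic set contains a configuration $\{a+p(d):p\in P\cup\{0\}\}$ with $d\in FS(S)$ (this is the \emph{nonemptiness} version, which your shift-class argument does deliver), contradicting the definition of $A'$. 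The remaining point --- that the $n$ required for this last step can be fixed in terms of $P$ and $A$ before $S$ is chosen --- is where one genuinely needs either the ultrafilter formulation (a minimal idempotent below a point of $K(\beta\mathbb{N})$ meeting $A$) or a uniform finitary bound coming out of polynomial Hales--Jewett itself; your sketch does not supply this, and it is the real content hiding behind the paper's one-line citation.
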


Also, we will use the following result of Moreira \cite{moreira2017monochromatic}. It should be noted that in \cite{moreira2017monochromatic} the result is only stated for integer polynomials but the same proof works for polynomials with rational coefficients.

\begin{theorem}\label{rational.moreira}
    Let $P\subseteq\pol$ be finite. For any finite coloring of $\mathbb{N}$ there exist $x,y\in\mathbb{N}$ and a color class $C$ such that $\{x,xy,x+p(y):p\in P\}\subseteq C$.
\end{theorem}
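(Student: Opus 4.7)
The statement is Moreira's theorem from \cite{moreira2017monochromatic} with the coefficient field $\mathbb{Z}$ replaced by $\mathbb{Q}$. My plan is to reduce to the integer case by a scaling argument, consistent with the authors' comment that ``the same proof works''.

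Let $N$ be a positive integer such that $Np$ has integer coefficients for every $p \in P$; such an $N$ exists because $P$ is finite. For any $y \in N\mathbb{N}$ each value $p(y)$ is then a (non-negative) integer, so it suffices to produce $x \in \mathbb{N}$ and $y \in N\mathbb{N}$ with $\{x, xy, x + p(y) : p \in P\}$ monochromatic under the given coloring. To force $y \in N\mathbb{N}$, I would inspect Moreira's original argument. His proof proceeds by a topological-dynamical recurrence in the orbit closure of the coloring, and at each stage the step size $y$ arises through a (polynomial) return-time map whose image can be intersected with any prescribed multiplicatively thick set without disrupting the recurrence. Since $N\mathbb{N}$ is multiplicatively thick, this additional constraint can be added throughout the proof. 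Equivalently, one may pass to the combinatorial proof of the first named author \cite{alweiss2022monochromatic}, which relies on the $IP_n^*$ polynomial van der Waerden theorem (Theorem \ref{vdw}), and where the restriction $y \in N\mathbb{N}$ is handled cleanly through the closure properties recorded in Lemma \ref{basic}.

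The main obstacle I anticipate is multiplicative alignment: the two most obvious scaling tricks---substituting $y \mapsto Ny$, or passing to the coloring $c'(z) = c(Nz)$---each correctly rescale only a subset of the three terms $x$, $xy$, $x + p(y)$ simultaneously, because $x \cdot (Ny) = N(xy)$ is not $xy$. Running Moreira's (or Alweiss's) argument with the step size constrained to $N\mathbb{N}$ from the outset circumvents this difficulty, and once that is done the remainder of the proof of Moreira's integer theorem transfers verbatim, because every expression of the form $p(y)$ is an integer by the divisibility of $y$ by $N$.
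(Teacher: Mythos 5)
Your proposal is correct and amounts to essentially the same justification the paper gives: the paper offers no proof of Theorem \ref{rational.moreira} beyond the remark that Moreira's argument works verbatim for rational coefficients, and your idea of threading the constraint $y\equiv 0\ (\mathrm{mod}\ N)$ (with $N$ clearing all denominators of $P$) through the step-size selections is precisely the mechanism that makes that remark true, since each step size in the iteration is drawn from an $FS$-set that can be taken inside $N\mathbb{N}$ (an additively $IP_N^*$ set, which is the operative property here rather than multiplicative thickness, though $N\mathbb{N}$ is also thick and hence central, so your framing is compatible with Proposition \ref{central*}). You also correctly identify the one genuine pitfall, that the naive substitution $y\mapsto Ny$ misaligns the $xy$ term, which is exactly why the integer-coefficient theorem cannot be invoked as a black box.
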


In Subsection \ref{sec:3.2} we will also make reference to the following notion of size.

\begin{definition}
    A subset $C\subseteq \mathbb{N}$ is \textbf{multiplicatively central} if there is a minimal idempotent ultrafilter $e\in(\beta\mathbb{N},\cdot)$ with $C\in e.$
\end{definition}

We will only need to make use of the following fact about multiplicatively central sets, which was proven in \cite[Theorem 3.1]{bowen2025monochromatic}.

\begin{theorem}\label{products of sums}
    If $C$ is multiplicatively central, then there is a sequence of subsets $(S_i)_{i\in \mathbb{N}}$ of $\mathbb{N}$ such that $|S_i|=i$ and for all finite $F\subseteq \bigcup_{i\in \mathbb{N}}S_i$ we have $$\prod_{i\in \mathbb{N}}\sum_{f\in S_i\cap F}f\subseteq C.$$
\end{theorem}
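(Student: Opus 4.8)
The plan is to combine the standard idempotent ``$FP$-set'' bookkeeping in $(\beta\mathbb{N},\cdot)$ with a single application of Folkman's theorem, whose job is to convert multiplicative largeness into additive finite-sums structure. Fix a minimal idempotent $e\in(\beta\mathbb{N},\cdot)$ with $C\in e$, and for $A\in e$ write $A^{\star}=\{x\in A: A/x\in e\}$; I will use the standard facts that $A^{\star}\in e$ and that $A/x\in e$ whenever $x\in A^{\star}$.

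The key lemma I would establish is: \emph{every multiplicatively piecewise syndetic set $D\subseteq\mathbb{N}$ contains, for each $n$, an additive finite sums set $FS(x_1,\dots,x_n)$ with the $x_i$ distinct.} To prove it I would invoke the characterization that $D$ is multiplicatively piecewise syndetic if and only if there is a finite $F\subseteq\mathbb{N}$ with $\bigcup_{t\in F}(D/t)$ multiplicatively thick. Picking, for $M$ large in terms of $n$ and $|F|$, some $s$ with $s\cdot\{1,2,\dots,M\}\subseteq\bigcup_{t\in F}(D/t)$, I would colour each $j\in\{1,\dots,M\}$ by the nonempty set $\chi(j)=\{t\in F: sjt\in D\}\subseteq F$. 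Folkman's theorem then produces distinct $u_1,\dots,u_n$ with $FS(u_1,\dots,u_n)\subseteq\{1,\dots,M\}$ $\chi$-monochromatic of some colour $T$, and then for any $t_0\in T$ and nonempty $G\subseteq[n]$ we have $\chi\bigl(\sum_{i\in G}u_i\bigr)=T\ni t_0$, whence $st_0\sum_{i\in G}u_i=\sum_{i\in G}(st_0u_i)\in D$; that is, $FS(st_0u_1,\dots,st_0u_n)\subseteq D$. Since every member of a minimal multiplicative idempotent is multiplicatively central, hence multiplicatively piecewise syndetic, the lemma applies to every set in $e$.

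Granting the lemma, I would construct the $S_i$ recursively alongside a decreasing chain $C\supseteq C^{\star}=D_0\supseteq D_1\supseteq\cdots$ of members of $e$, maintaining the invariant that for every \emph{partial product} $p=\prod_{i\in I}\bigl(\sum_{f\in G_i}f\bigr)$ with $\emptyset\neq I\subseteq[n]$ and $\emptyset\neq G_i\subseteq S_i$ we have $p\in C$ and $p\cdot D_n\subseteq C$. Given $D_n$, apply the lemma to $D_n^{\star}\setminus(S_1\cup\dots\cup S_n)\in e$ to obtain $FS(x_1,\dots,x_{n+1})$ inside it, and set $S_{n+1}=\{x_1,\dots,x_{n+1}\}$; this has size $n+1$ and is disjoint from the earlier $S_i$, so every finite $F$ meets only finitely many of the $S_i$. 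Put $D_{n+1}=D_n\cap\bigcap_{\sigma\in FS(S_{n+1})}(D_n/\sigma)$, a finite intersection of members of $e$ (each $\sigma\in D_n^{\star}$, so $D_n/\sigma\in e$). A short check---using $\sigma\in D_n^{\star}\subseteq D_n\subseteq C$, the inductive hypothesis $p\cdot D_n\subseteq C$, and $D_n/\sigma\supseteq D_{n+1}$---shows that the new partial products lie in $C$ and that the invariant persists. Finally, for any finite nonempty $F\subseteq\bigcup_i S_i$ the expression $\prod_{i:\,S_i\cap F\neq\emptyset}\sum_{f\in S_i\cap F}f$ is exactly a partial product (at level $m=\max\{i: S_i\cap F\neq\emptyset\}$), hence lies in $C$.

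The step I expect to be the main obstacle is the lemma -- extracting an honest additive finite-sums set from the purely multiplicative fact that $D$ lies in a minimal idempotent of $(\beta\mathbb{N},\cdot)$. The device that makes it go through is to run along a long multiplicative progression $s\cdot\{1,\dots,M\}$, record at each point \emph{which} of the finitely many bounded multiplicative shifts by elements of $F$ lands inside $D$, and apply Folkman's theorem to this finite colouring of $\{1,\dots,M\}$. Once that is available the remainder is routine ultrafilter bookkeeping, modulo minor care with the empty product (only nonempty $F$ need be considered) and with keeping the sets $S_i$ pairwise disjoint.
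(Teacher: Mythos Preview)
The paper does not actually prove this theorem; it merely quotes it from \cite{bowen2025monochromatic} (Theorem~3.1 there) and uses it as a black box. So there is no ``paper's own proof'' to compare against.

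That said, your proposal is a correct self-contained proof. The lemma---that any multiplicatively piecewise syndetic set contains $FS(x_1,\dots,x_n)$ for every $n$---is exactly the right bridge between the multiplicative largeness coming from the minimal idempotent and the additive structure required, and your proof of it via Folkman on a long geometric block $s\cdot\{1,\dots,M\}$ coloured by the ``which shifts land in $D$'' data is clean and standard. The recursive $D_n$/$S_n$ construction with the invariant ``every partial product lies in $C$ and left-absorbs $D_n$ into $C$'' is the usual idempotent bookkeeping, and your verification of the inductive step is correct: the three cases (old products, the new factor alone, and old times new) all go through because $\sigma\in D_n^{\star}$ forces both $\sigma\in D_n\subseteq C$ and $D_n/\sigma\in e$. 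Removing the finite set $S_1\cup\dots\cup S_n$ before applying the lemma is harmless since $e$ is non-principal, and it guarantees the disjointness you need for $|S_i|=i$ and for any finite $F$ to meet only finitely many $S_i$. The only cosmetic point is that the displayed conclusion in the statement should be read as $\in C$ rather than $\subseteq C$, with the product ranging over those $i$ for which $S_i\cap F\neq\emptyset$; you interpret it this way, which is clearly what is intended and how it is used in Proposition~\ref{central*}.
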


%Occasionally, we will use ultrafilters in the proofs. More precisely, we will use the semigroup $\beta\mathbb{N}$ and  the existence of an idempotent element in $\beta\mathbb{N}$. Given an ultrafilter $\mathcal{U}\in\beta\mathbb{N}$ and a property $\varphi(n)$ of natural numbers we use the notation $\forall^\mathcal{U}n\  \varphi(n)$ to write that $\{n\in\mathbb{N}:\varphi(n)\}\in\mathcal{U}$.

\section{The pattern $\{x,y,xy,x+iy: i\leq k\}$}\label{section +}

\subsection{A proof using distribution among polynomials}

\hspace{2mm}

In this subsection we give our first proof of Theorem \ref{t1}.  A basic outline of the argument is as follows.  Assume we are given a coloring $\mathbb{N}=R\cup B$. First, an application of the polynomial van der Waerden theorem will allow us to assume that there are arbitrarily long polynomial progressions in both color classes with the same step size, i.e., for any finite set of integral polynomials $P$ there is a $d\in\mathbb{N}$ and $a_0\in R$ and $a_1\in B$ such that $\{a_0+p(d): p\in P\}\subseteq R$ and $\{a_1+p(d): p\in P\}\subseteq B$.

From here there are two cases, depending on how the color classes are distributed among the set $P(d).$

If both color classes are well distributed in $P(d),$ then we will be able to exploit this fact to upgrade the first author's proof \cite{alweiss2022monochromatic} that the pattern $\{x, xy, x+p(y): p\in P\}$ is partition regular to also show that the color of the $y$ term can be controlled as well.

On the other hand, if the color classes are not well distributed in $P(d)$ then we will in fact be able to deduce that the set $P(d)$ is very close to being monochromatic in some color, say $R$.  Since the set $P(d)$ has a substantial amount of both additive and multiplicative structure, this will ensure that $R$ contains subsets of the form $\{x,y, xy, x+iy: i\leq k\}.$

\begin{proof}[First proof of Theorem \ref{t1}]

    We fix a two-coloring $\mathbb{N}=R \cup B$ and refer to these colors as red and blue.

    Our first goal is to show that we can assume that both color classes contain long polynomial progressions with a common step size.  Towards this, we will use the following claim together with the polynomial van der Waerden theorem.
    
   \begin{claim}\label{c1}
     If one of the colors is not additively piecewise syndetic, then for any finite set $P$ of integral polynomials without constant coefficient, there is a subset $\{x,y,xy,x+p(y):p\in P\}$ monochromatic in the other color.
   \end{claim}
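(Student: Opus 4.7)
My plan is to exploit the non-PS assumption on $B$ directly: it forces $R$ to be large enough that the pattern can be found in $R$ by a one-shot syndeticity argument, without needing Moreira's theorem at all. Assume without loss of generality that $B$ is not additively piecewise syndetic, so I aim for the pattern in $R$.

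The key estimate I would establish is that for any $y \in \mathbb{N}$ and any finite $F \subseteq \mathbb{Z}$, the set
\[
S_{y,F} := \bigl\{x \in \mathbb{N} : xy \in R \text{ and } x + f \in R \text{ for all } f \in F\bigr\}
\]
is additively syndetic. To see this I would take complements: $\mathbb{N} \setminus S_{y,F}$ is contained in $(B/y) \cup \bigcup_{f \in F}(B-f)$, where $B/y := \{x : xy \in B\}$. Each $B - f$ is not additively PS, by shift-invariance of PS (a direct consequence of Lemma \ref{basic}). The crucial observation is that $B/y$ is also not PS: if it were, then $y(B/y)$ would be additively PS by Lemma \ref{basic}, and since $y(B/y) \subseteq B$, upward-closure of PS would force $B$ to be PS, contradicting the assumption. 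A finite union of non-PS sets is non-PS (the contrapositive of the Ramsey clause in Lemma \ref{basic}), hence not thick (thick sets are PS), so its complement is syndetic.

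Given this, the conclusion is immediate. First, $R$ is additively syndetic because $B$ is not thick, so I can pick any $y \in R$. Setting $F := \{0\} \cup \{p(y) : p \in P\} \subseteq \mathbb{Z}$ (finite since $P$ is finite and integral), the main estimate says $S_{y,F}$ is syndetic, in particular unbounded, so I can choose $x \in S_{y,F}$ large enough that $x + p(y) \geq 1$ for every $p \in P$. This $(x,y)$ then gives $\{x, y, xy, x + p(y) : p \in P\} \subseteq R$.

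The only step I expect to require genuine thought is the non-PS-ness of $B/y$, which is the one place the argument mixes additive and multiplicative structure; it is dispatched by the identity $y(B/y) \subseteq B$ together with Lemma \ref{basic}. Everything else is standard bookkeeping and no deeper tools (van der Waerden, Moreira, idempotent ultrafilters) should enter the proof of this particular claim.
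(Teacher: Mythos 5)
Your proof is correct and is essentially the paper's argument read in the contrapositive: where the paper fixes $y$ in the thick color, intersects a thick set of candidates $\{a : a+p(y)\in B \ \forall p\}$ with that color, and invokes dilation invariance plus partition regularity of piecewise syndetic sets, you instead cover the set of bad $x$ by the non-piecewise-syndetic sets $B/y$ and $B-f$ and conclude via the thick/syndetic duality. The pivotal step is identical in both versions --- dilation invariance of piecewise syndeticity (via $y(B/y)\subseteq B$) is what controls the color of $xy$ --- so this is the same approach, correctly executed, with the added care of handling possibly negative values of $p(y)$ that the paper glosses over.
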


   \begin{proof}
       Without loss of generality assume that $R$ is not additively piecewise syndetic. Then $B$ is additively thick.  Fix $y\in B$. Since $B$ is thick, we know $A=\{a\in B: \forall p\in P\ a+p(y)\in B\}$ is thick  as well and thus additively piecewise syndetic. Next, we know that $Ay$ is additively piecewise syndetic as well. So $Ay\cap R$ or $Ay\cap B$ is piecewise syndetic  and by assumption it is   $Ay\cap B$. Consequently, for any $x\in (Ay\cap B)/y$ we see that $\{x, y, xy, x+p(y):p\in P\}\subseteq B$.
   \end{proof}

   Consequently we may assume that both $R$ and $B$ are additively piecewise syndetic.  This allows us to deduce the following:
   
   \begin{claim}\label{common step size}
       For any finite set of polynomials $P\subset x\mathbb{Q}[x],$ there is $a_0\in R,$ $a_1\in B,$ and $d\in \mathbb{N}$ such that $\{a_0+p(d): p\in P\}\subseteq R$ and $\{a_1+p(d):p\in P\}\subseteq B.$ 
   \end{claim}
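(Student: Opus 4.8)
The plan is to apply the $IP_n^*$ polynomial van der Waerden theorem (Theorem~\ref{vdw}) to each color class separately and then locate a \emph{common} step size $d$ by intersecting the two resulting families of ``good'' step sizes.

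Concretely, since $R$ is additively piecewise syndetic, Theorem~\ref{vdw} applied to $R$ and $P$ produces an $n_1\in\mathbb N$ such that the set
\[
D_R=\Bigl\{\,d\in\mathbb N:\ R\cap\bigcap_{p\in P}\bigl(R-p(d)\bigr)\text{ is additively piecewise syndetic}\,\Bigr\}
\]
meets $FS(S)$ for every $S$ with $|S|\ge n_1$; that is, $D_R$ is additively $IP_{n_1}^*$, and hence in particular additively $IP^*$ (any $IP_{n_1}^*$ set meets $FS(S)$ for every infinite $S$, by restricting to the first $n_1$ terms of $S$). Applying Theorem~\ref{vdw} to the piecewise syndetic set $B$ and the same $P$ yields, in the same way, an additively $IP^*$ set $D_B$ with the analogous property.

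Now I would invoke the standard fact (a consequence of Hindman's theorem; see \cite{hindman.strauss}) that the family of additively $IP^*$ subsets of $\mathbb N$ is a filter of nonempty sets, so that $D_R\cap D_B\neq\emptyset$; fix some $d$ in this intersection. If one prefers to stay finitary, the same $d$ can be produced directly via Folkman's theorem: take a sufficiently long increasing sequence $S$, $2$-color $FS(S)$ by membership in $D_R$, extract a subsequence $S'$ with $|S'|\ge\max(n_1,n_2)$ and $FS(S')$ monochromatic, observe that the monochromatic color must be ``in $D_R$'' since $D_R$ is $IP_{n_1}^*$ and $|S'|\ge n_1$, and then use that $D_B$ is $IP_{n_2}^*$ and $|S'|\ge n_2$ to find $d\in FS(S')\cap D_B\subseteq D_R\cap D_B$.

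With this common $d$ the claim is immediate: $R\cap\bigcap_{p\in P}(R-p(d))$ is piecewise syndetic and therefore nonempty, so any $a_0$ in it satisfies $a_0\in R$ and $a_0+p(d)\in R$ for every $p\in P$; symmetrically, any $a_1\in B\cap\bigcap_{p\in P}(B-p(d))$ gives $a_1\in B$ with $a_1+p(d)\in B$ for every $p\in P$. The only step that requires real thought is the passage from ``works for $R$ with some step $d$'' together with ``works for $B$ with some step $d'$'' to a single common step size, and this is exactly what the filter property of additively $IP^*$ sets (equivalently, the finite sums theorem) delivers; the rest is bookkeeping.
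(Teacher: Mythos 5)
Your proof is correct and takes essentially the same route as the paper: apply Theorem~\ref{vdw} to $R$ and to $B$ separately to see that the corresponding sets of good step sizes are both additively $IP^*$, then intersect them using the filter property of $IP^*$ sets (equivalently, Hindman's theorem) to find a common $d$, and finally pick $a_0$ and $a_1$ from the resulting nonempty (indeed piecewise syndetic) sets. The finitary Folkman-style extraction you sketch is a pleasant alternative phrasing of the same intersection step, but it is not a different argument.
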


   \begin{proof}
       Since both $R$ and $B$ are additively piecewise syndetic, by the polynomial van der Waerden Theorem \ref{vdw}, we observe that $\{d: R\cap\bigcap_{p\in P}(R-p(d))\neq \emptyset\}$ is additively IP$^*$, and likewise for $B$.  Since the intersection of two IP$^*$ sets is non-empty, we can find a single $d$ such that both $R\cap\bigcap_{p\in P}(R-p(d))\neq \emptyset$ and $B\cap\bigcap_{p\in P}(B-p(d))\neq \emptyset.$  Choosing $a_0$ in the first set and $a_1$ in the second is as desired.
   \end{proof}

\begin{remark}
    We also have an alternative purely finitary argument that could take the place of Claims \ref{c1} and \ref{common step size}.  One just needs to find two polynomial progressions, $a_0+p(d)$ which is red and $a_1+p(d)$ which is blue.  Using the polynomial van der Waerden theorem, without loss of generality we can find red $a_0+p(d)$.  We claim that we can also ensure that $d$ is red, this is ``Brauer-Bergelson-Leibman".  If all polynomials $p(d)$ with small degree and small integer coefficients are blue, we are immediately done.  If some fairly small $d'=q(d)$ is red, we can rename $y \leftarrow d'=q(d)$, so $d$ is also red.  Then we are immediately done unless $a_0d+dp(d)$ are all blue.  Changing $a_1$ to $a_0d$ and renaming $y$ as $d^2$, we have $a_0+p(d)$ is red and $a_1+p(d)$ is blue as desired, for $p$ up to a certain size.
\end{remark}
%\end{proof}

Now suppose that we are proving Theorem \ref{t1} for a fixed choice of $k\in\mathbb{N}.$  Let $$P=\{\alpha x^j: \alpha\in \{\frac{a}{b}: a,b\leq 2k+1\}, j\leq K\}\subseteq x\mathbb{Q}[x],$$ where $K$ is a large constant depending on $k$.  Note that we do not require $\alpha$ to be an integer in the above definition, which is possible due to the version of van der Waerden's theorem, \ref{vdw}, that we are using. We will use Claim \ref{common step size} with this choice of $P$ and define $P(d)=\{p(d): p\in P\}$, where $d$ is the step size given by Claim \ref{common step size}.

%   \begin{claim}\label{c2}
%       Let $P\subset \mathcal{P}$ be a finite set.  If there are polynomials $p,q\in P'$ such that $P(\{p,q,q/p\})\subset P'$ and with $p(d)\in C_j,$ $q(d)\in C_{j-1},$ and $p(d)q(d)\in C_j,$ then there is a monochromatic set of the form $\{x,y,xy, x+p(y): p\in P\}.$
%   \end{claim}
   \begin{claim}\label{c2}
       Suppose that there are  polynomials $p,q\in P$ such that $p|q$ in $\mathbb{Q}[x]$ and $\{\frac{q}{p},pq,ip,iq: i\leq k\}\subseteq P$ and either: 
       
       \begin{enumerate}
           \item $p(d)\in R,$ $q(d)\in B,$ and $p(d)q(d)\in R$, or
           \item $p(d)\in B,$ $q(d)\in R,$ and $p(d)q(d)\in B$.
       \end{enumerate} 
       
       Then there is a monochromatic set of the form $\{x,y,xy, x+iy: i\leq k\}.$ 
   \end{claim}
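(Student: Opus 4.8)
\section*{Proof plan for Claim \ref{c2}}

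The plan is to exploit the color-reversing symmetry between the two alternatives together with a short case analysis driven by dilating piecewise syndetic sets. Interchanging the two color classes turns alternative (2) into alternative (1), so I may assume we are in case (1): $u:=p(d)\in R$, $w:=q(d)\in B$, and $z:=p(d)q(d)=(pq)(d)\in R$; since $p\mid q$ we also have $w=u\cdot (q/p)(d)$, so $u\mid w\mid z$. First I would record that $d$ can be taken so that
\[
A:=R\cap\bigcap_{p'\in P}\bigl(R-p'(d)\bigr),\qquad B_0:=B\cap\bigcap_{p'\in P}\bigl(B-p'(d)\bigr)
\]
are both additively piecewise syndetic: this is exactly what the van der Waerden input behind Claim~\ref{common step size} provides, since Theorem~\ref{vdw} actually yields piecewise syndeticity, not just nonemptiness, and the corresponding set of admissible $d$ is $IP^*$. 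Because $ip,iq\in P$ for $i\le k$, every $a\in A$ satisfies $a+iu\in R$ for $0\le i\le k$, and every $b\in B_0$ satisfies $b+iw\in B$ for $0\le i\le k$.

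Next, by Lemma~\ref{basic} the set $uA$ is additively piecewise syndetic, so either $uA\cap R\ne\emptyset$ or $uA\subseteq B$. In the first case, choosing $a\in A$ with $ua\in R$ gives $\{a,u,ua,a+iu:i\le k\}\subseteq R$, the required configuration with $x=a$ and $y=u=p(d)$. Symmetrically $wB_0$ is piecewise syndetic, so either $wB_0\cap B\ne\emptyset$ --- and then $b\in B_0$ with $wb\in B$ yields $\{b,w,wb,b+iw:i\le k\}\subseteq B$ with $x=b$, $y=w=q(d)$ --- or $wB_0\subseteq R$. Thus the only remaining case is $uA\subseteq B$ together with $wB_0\subseteq R$.

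In this last case I would bring in the multiplicative witness $z=uw\in R$ and the factorization $w=u\cdot(q/p)(d)$. The hope is that, for $b\in B_0$, not only is $wb\in R$, but also (after a harmless enlargement of the polynomial set used to define $B_0$ --- this is what the slack in the definition of $P$ and the large $K$ are meant to absorb) $wb+iz=w\bigl(b+iu\bigr)\in wB_0\subseteq R$ for $i\le k$. Then, with $x=wb$ and $y=z$, the set $\{x,y,xy,x+iy:i\le k\}$ lies in $R$ as soon as $xy=zwb\in R$, i.e.\ as soon as the piecewise syndetic set $z\cdot wB_0$ meets $R$. If it does, we are done; and if instead $z\cdot wB_0\subseteq B$, then $wB_0$ is a red piecewise syndetic set carried into $B$ by multiplication by the red element $z$ --- structurally the same configuration as ``$uA\subseteq B$'' --- so one would hope to close the argument by one further turn of the same dichotomy, now using $pq\in P$ and $q/p\in P$ to keep every shift size of the form $p'(d)$ with $p'\in P$.

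The part I expect to be the genuine obstacle is exactly this last step: once $y$ is forced to be a multiplicative combination such as $p(d)q(d)$ rather than $p(d)$ or $q(d)$ themselves, one must still certify that the progression $x+iy$ stays inside a color class one controls, i.e.\ that the relevant shift sizes remain legal after the dilations. This is precisely what the hypothesis $\{q/p,pq,ip,iq:i\le k\}\subseteq P$, together with the room built into $P$ by the large degree bound $K$, is designed to guarantee; verifying that finitely many iterations of the dichotomy suffice, with no case escaping, is where the real bookkeeping lies.
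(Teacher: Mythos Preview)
Your symmetry reduction to alternative (1) matches the paper's, and your first two sub-cases (when $uA$ meets $R$, or $wB_0$ meets $B$) are correct. But the paper's argument is much shorter and avoids piecewise syndeticity altogether: it works only with the single element $a_0$ supplied by Claim~\ref{common step size} and does a three-case split on the colours of the finitely many numbers $a_0\,p(d)+iq(d)$ for $0\le i\le k$ together with $a_0\,p(d)q(d)$. If all of these are blue, then $x=a_0 p(d)$, $y=q(d)$ is a blue configuration outright (here $y=q(d)\in B$ by hypothesis, $xy=a_0 p(d)q(d)$, and $x+iy=a_0 p(d)+iq(d)$). If $a_0\,p(d)q(d)\in R$, then $x=a_0$, $y=p(d)q(d)$ is a red configuration (using $pq\in P$). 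If some $a_0 p(d)+iq(d)\in R$, then $x=a_0+i\,(q/p)(d)$, $y=p(d)$ is a red configuration (using $q/p\in P$). No dilation of sets, no iteration.

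The step you flag as the ``genuine obstacle'' is a real gap, not bookkeeping. To get $wb+iz=w(b+iu)\in wB_0$ you need $b+iu\in B_0$, which means $b+ip(d)+p'(d)\in B$ for \emph{every} $p'\in P$; that forces $ip+p'\in P$, and since the ambient $P$ in this section consists only of monomials this fails whenever $p$ and $p'$ have different degrees. No finite enlargement of $P$ within the monomial family absorbs such sums, so the hoped-for iteration does not close. The paper sidesteps the issue entirely by working at the single point $a_0$ rather than a whole set: the colour conditions become a finite list, and when they all go blue one reads off the blue configuration $\{a_0 p(d),\,q(d),\,a_0 p(d)q(d),\,a_0 p(d)+iq(d):i\le k\}$ directly instead of trying to dilate back into a controlled colour class.
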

%   {\color{red} needs to be cleaned up.}

   \begin{proof}

       Recall that we have chosen $a_0\in R$ and $a_1\in B$ via Claim \ref{common step size} so that $\{a_0+p(d):p\in P\}\subset R$ and $\{a_1+p(d):p\in P\}\subset B$.  We now assume that item (1) of the claim holds, as the other case is identical up to swapping the roles of $R$ and $B.$

       There are three cases to check:

       \textbf{Case 1}. There is an $i\leq k$ such that  $a_0\cdot p(d) + iq(d)\in R $. Then we can conclude by setting $x=a_0+iq(d)/p(d)$ and $y=p(d).$  
       
      \textbf{Case 2}.  Suppose $a_0\cdot p(d)q(d)\in R$.  Then we can conclude by setting $x=a_0$ and $y=p(d)q(d).$

       \textbf{Case 3}. Suppose that we are in neither Case 1 nor Case 2.  Then note that $a_0\cdot p(d) + iq(d)\in B$ for all $i\leq k$ and $a_0\cdot p(d)q(d)\in B.$  In this case, we can conclude by setting $x=a_0\cdot p(d)$ and $y=q(d).$ 
   \end{proof}

%   \begin{claim}
%       If the assumptions of Claim \ref{c2} are not satisfied, then there is a monochromatic set of the form $P(d')$ for $P$ a large set of monomials.
%   \end{claim}

%{\color{red} expand}
%   \begin{proof}
%       First look at the sequence of polynomials $(d, d^2, d^3,...)$.  If this switches between $C_i$ and $C_{1-i}$ more than once then we find $p|q$ as desired, so up to throwing out some variables and renaming we can assume they're all monochromatic in $C_i.$  Now look at $(2d, 2d^2, 3d^2,...).$  If any of these are in $C_{1-i}$ then sufficiently late ones must be in $C_i$ or else we find the desired $p|q,$ so again we may assume these are all in $C_i.$  Continuing in this way, we find the desired monochromatic set of the form $P(d').$
%   \end{proof}

%\begin{proof}[Proof of the main theorem]

    \begin{claim}\label{c3}
        If the conditions of Claim \ref{c2} are not satisfied, then there is $e\in P(d)$ such that $\{ie^j: i\leq 2k+1, j\leq 2\}$ is monochromatic.
    \end{claim}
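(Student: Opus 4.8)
The plan is to deduce from the failure of Claim~\ref{c2} that the coloring of $P(d)$ is forced into an essentially monotone shape along the powers of $d$, and then to extract a long monochromatic block of the desired form.

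First I would reformulate the hypothesis. For a pair $p,q\in P$ with $p\mid q$ in $\mathbb{Q}[x]$ and $\{q/p,pq,ip,iq:i\le k\}\subseteq P$ (call such a pair \emph{admissible}), the negation of items (1) and (2) of Claim~\ref{c2} says exactly: if $p(d)$ and $q(d)$ get different colors, then $p(d)q(d)$ gets the color of $q(d)$; equivalently, a product never takes the color of its ``smaller'' divisor $p(d)$ unless the two factors already agree. Feeding $p=x$ and $q=x^{j}$ into this for $2\le j\le K-1$ — admissibility is automatic here because $K$ is large, so $x^{j-1},x^{j+1},ix,ix^{j}\in P$ — gives: if $d$ and $d^{j}$ differ in color then $d^{j+1}$ matches $d^{j}$. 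A short induction then shows the color sequence of $d,d^{2},\dots,d^{K-1}$ is monotone: it agrees with the color of $d$ up to a threshold and then switches (at most once) and stays. In particular there is a color $\gamma$ and an interval $[s,s+L]\subseteq[1,K-1]$ with $L\ge(K-3)/2$ and $s\le L+2$ on which every power $d^{m}$ has color $\gamma$.

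Next I would push $\gamma$ out to the multiples. For a coefficient $c$ with $ic\in P$ for all $i\le k$ (this holds in particular for $c=a/b$ with $a\le 2$), the pair $(c\,x^{a},x^{b})$ is admissible whenever $a<b$ and $a+b\le K$, so the rule yields: if $c\,d^{a}$ is not colored $\gamma$, then $c\,d^{a+b}$ is, for every $b$ in the long block above; combined with the analogous monotonicity of $m\mapsto$ color of $c\,d^{m}$, this forces $c\,d^{m}=\gamma$ on a long interval of exponents. To reach the remaining integer multiples $3,4,\dots,2k+1$ one iterates the rule through factorizations $i=c_{1}c_{2}$ chosen so that the monomials $c_{1}x^{a_{1}}$, $c_{2}x^{a_{2}}$, their quotient and their product all lie in $P$, which is exactly what the rational coefficient set $\{a/b:a,b\le 2k+1\}$ and the oversized degree bound $K$ are designed to allow; along the way the various monotonicity windows can be aligned using the cross-relations coming from admissible pairs of the form $(c_{1}x^{a},c_{2}x^{b})$. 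Intersecting the boundedly many long $\gamma$-windows obtained for $i=1,\dots,2k+1$ leaves an interval on which all of $d^{m},2d^{m},\dots,(2k+1)d^{m}$ are colored $\gamma$; picking $m$ with $m$ and $2m$ both in it and setting $e=d^{m}\in P(d)$ gives that $ie^{j}=i\,d^{jm}$ has color $\gamma$ for all $i\le 2k+1$, $j\le 2$, which is the desired configuration.

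The step I expect to be the real obstacle is spreading the color to all multiples up to $2k+1$: the closure requirement $ip,iq\in P$ in Claim~\ref{c2} is restrictive, so only monomials with small coefficients may legitimately serve as $p$ or $q$ in the propagation rule, and one must route around this through carefully chosen rational factorizations while tracking how the usable exponent-ranges shrink. The monotonicity of the powers of $d$ and the freedom to take $K$ enormous relative to $k$ are what make this bookkeeping close.
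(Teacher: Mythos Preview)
Your outline is exactly the paper's argument: use the failure of Claim~\ref{c2} on the pairs $(x,x^{j})$ to show the colour sequence $d,d^{2},\dots$ changes at most once, replace $d$ by a power so that all of $d,d^{2},\dots$ are (say) red, then run the same reasoning on $2d,2d^{2},\dots$ via the pair $(2x^{i},x^{j-i})$ to force these eventually red, replace $d$ again, and continue through the coefficients $3,\dots,2k{+}1$. The paper does this in a few lines, writing simply ``Repeating this $2k{+}1$ many times'' and taking $e$ to be the final $d$; there is no tracking of overlapping exponent-windows and no routing through rational factorisations.

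The obstacle you single out is real with the coefficient set $\{a/b:a,b\le 2k{+}1\}$ as literally written: for $c\ge 3$ the pair $(cx^{i},x^{j-i})$ is not admissible because $kc>2k{+}1$. But your factorisation workaround does not escape it. Any monomial that can serve as $p$ or $q$ in Claim~\ref{c2} must have a coefficient $\alpha$ with $k\alpha\in\{a/b:a,b\le 2k{+}1\}$, which forces the numerator of $\alpha$ to be at most $2$; the product of two such coefficients is then bounded by roughly $4$ and can never reach $2k{+}1$, so no chain of admissible pairs will propagate the colour out to $(2k{+}1)d^{m}$. The fix the paper implicitly relies on is much simpler: enlarge the coefficient bound in the definition of $P$, e.g.\ to $\{a/b:a,b\le k(2k{+}1)\}$. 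This is free since $P$ only has to be finite, and with it the direct iteration for each $c\le 2k{+}1$ goes through verbatim. So drop the factorisation plan and just widen $P$.
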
 

    \begin{proof}
        To begin, consider the sequence of polynomials $d, d^2, d^3,\ldots,d^K\in P(d)$. We claim that if the hypothesis of Claim \ref{c2} is not satisfied, then for any $n<K$ either $\{d^i: i\leq n\}$ is monochromatic, or the set $\{d^i: n<i<K-n\}$ is.  To see this, suppose that $\{d^i: i\leq n\}$ is not monochromatic.  Then without loss of generality we may assume that $d\in R$ and there is some $i\leq n$ with $d^i\in B$.  Now note that if $d^{i+1}\in R,$ then Claim \ref{c2} condition (1) is satisfied with $p(d)=d$ and $q(d)=d^i$.  Consequently, we may assume that $d^{i+1}\in B,$ and repeating this logic ensures that $d^{j}\in B$ for all $j\geq i$.   
    
    Thus, after possibly replacing $d$ with $d^n$ for some $n$, we can assume that all polynomials $d, d^2, d^3,\ldots$ have the same color, say red. Now look at $2d, 2d^2, 2d^3,\ldots\in P(d)$.  Note that for any fixed $n$, if there is an $i\leq n$ with $2d^i\in B$ and Claim \ref{c2} condition (2) is not satisfied, then $\{2d^j: i\leq j\}\subseteq R.$  To see this, suppose towards a contradiction that $2d^j\in B$.  Then $p(d)=2q^i\in B$ and $q(d)=d^{j-i}\in R$ satisfy Claim \ref{c2} condition (2).  

    So, again after possibly replacing $d$ with $d^n,$ we may assume that $d,d^2,...$ and $2d,2d^2,...\in P(d)$ are both monochromatic in color red.  Repeating this $2k+1$ many times produces the desired monochromatic set with $e$ being the final $d$.
    \end{proof}

    Theorem \ref{t1} now follows from Claims \ref{c2} and \ref{c3}, where in the latter case we may take $x=e$ and $y=2e$.  
    
\end{proof}

\begin{remark}
    Note that the same argument can be adapted to show that, for any finite set of \textit{monomials} $P\subset x\mathbb{Q}[x],$ any $2$-coloring of $\mathbb{N}$ contains monochromatic sets $\{x,y,xy,x+p(y): p\in P\}.$  On the other hand, we do not know how to generalize this argument to deal with polynomials such as $p(x)= x^2+x$.
\end{remark}

\subsection{A proof using multiplicative structure}\label{sec:3.2}

\hspace{2mm}

Our next proof is based on the dichotomy between multiplicatively thick and syndetic sets and is a synthesis of the ideas used in the proof of  \cite[Theorem 3.4]{bowen2025monochromatic} and \cite[Proposition 3.1]{bowen.sabok}.  

\begin{proof}[Second proof of Theorem \ref{t1}]

Bergelson and Moreira \cite[Theorem 6.1]{bergelson2018measure} have shown that every multiplicatively syndetic subset of $\mathbb{N}$ contains subsets of the form $\{x+y,xy\}.$  Our starting point in this section is the following refinement of this fact.

\begin{prop}\label{s+c}
    Let $P$ be a finite set of integral polynomials without constant coefficients.  Any subset of $\mathbb{N}$ that is both multiplicatively syndetic and multiplicatively central contains subsets of the form $\{x,y, xy, x+p(y): p\in P\}$.
\end{prop}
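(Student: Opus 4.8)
The plan is to exploit the two hypotheses separately: multiplicative syndeticity will supply a configuration of the form $\{x+p(y),xy\}$ (refining the Bergelson--Moreira argument to get a polynomial progression with step $y$ rather than just $x+y$), while multiplicative centrality will supply, via Theorem \ref{products of sums}, a product of sums structure rich enough to arrange that $x$ and $y$ themselves, and the product $xy$, all land in the given set. More concretely, let $C$ be our set, fix the finite polynomial family $P$, and let $Q\subseteq\pol$ be a slightly enlarged family including $P$ together with the polynomials needed to run the argument (the products and scalar multiples, as in Claim \ref{c2}, and the polynomials $x$ and $xp(x)$). First I would apply Theorem \ref{products of sums} to $C$ to obtain sets $(S_i)$ with $|S_i|=i$ such that every ``product of partial sums'' $\prod_i \sum_{f\in S_i\cap F} f$ lies in $C$. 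The point of this is that any element of the form $e_1 e_2\cdots e_m$ with the $e_j$ being sums over disjoint pieces of distinct $S_i$'s is automatically in $C$, so if we build $x$, $y$, and $x+p(y)$ out of such products we only need to certify a few of the summands.

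Next I would use multiplicative syndeticity together with the $IP_n^*$ polynomial van der Waerden theorem (Theorem \ref{vdw}), transported to the multiplicative semigroup. Since $C$ is multiplicatively syndetic, $\mathbb{N}=C/F$ for a finite multiplicative set $F$, and one can run a van-der-Waerden-type argument inside one of the finitely many multiplicative translates; the key is that additive progressions $a+p(y)$ with common step can be forced to lie in a single piece. The mechanism is the same as in Claim \ref{common step size} and in \cite{bowen2025monochromatic}: multiplicative syndeticity forces an additive piecewise syndetic ``trace'' somewhere, and then Theorem \ref{vdw} produces, for any prescribed finite polynomial set, a step $d$ lying in a controlled $FS$-set with $a_0+p(d)$ all one color. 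Choosing the $FS$-set to be drawn from $\bigcup_i S_i$ (so $d$ is itself a product of partial sums, hence $d$ and scalar multiples $id$ land in $C$) is what lets centrality and syndeticity talk to each other.

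With these two inputs in hand, the endgame mirrors the finitary case-analysis of Claims \ref{c2} and \ref{c3} in the first proof: set $y$ to be one of $d, d^j, id^j$ (all members of $C$ by the product-of-sums structure), set $x$ to be $a_0$ or $a_0\cdot p(d)q(d)$ or similar, and check that in each branch either $x+p(y)$ is already forced into $C$ by the van der Waerden step, or else we can pivot $x \leftarrow x\cdot(\text{something})$ using that the something is a product of partial sums and hence absorbed into $C$. The robustness here is that $C$ is closed under the relevant multiplications ``for free'' thanks to Theorem \ref{products of sums}, so the only real constraint is the additive one, which van der Waerden handles.

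The main obstacle I anticipate is the bookkeeping in the second step: making the multiplicative van der Waerden argument produce a step $d$ that simultaneously (i) lies in an $FS$-set of the $S_i$'s so that $d$, $id$, $d^2$, $id^2$, $\dots$ are guaranteed to be in $C$, and (ii) has $a_0+p(d)\in C$ for all $p$ in the enlarged family $Q$, while (iii) the base point $a_0$ is also chosen compatibly (e.g. $a_0$ a product of partial sums over $S_i$'s disjoint from those used for $d$, so that $a_0\cdot p(d)$, $a_0\cdot p(d)q(d)$ remain in $C$). This requires carefully interleaving the application of Theorem \ref{products of sums}, the passage to a multiplicative translate witnessing syndeticity, and Theorem \ref{vdw}; the $IP_n^*$ strength of Theorem \ref{vdw} is exactly what makes the intersection of the ``good step'' set with an $FS$-set non-empty, so this should go through, but it is the delicate part. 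Everything after that is the same three-case check as in Claim \ref{c2}.
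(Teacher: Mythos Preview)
Your plan has a real gap, and it diverges from the paper's argument in a way that matters.

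The paper does not try to manufacture $x$, $y$, $xy$ directly as products of partial sums. Instead it splits the two hypotheses cleanly as follows. From multiplicative syndeticity of $S$ one gets a finite $F$ with $\mathbb{N}=S/F$; this induces a \emph{finite coloring} $c:\mathbb{N}\to F$ by $c(n)=\min\{f\in F:nf\in S\}$. One then runs Moreira's argument on this coloring, but in the refined form of Proposition~\ref{central*}: the set of $y$ for which some $\{x',x'y,x'+p'(y):p'\in P'\}$ is $c$-monochromatic is (multiplicatively central)$^*$. Since $S$ is multiplicatively central, one may pick such a $y\in S$. If the monochromatic color is $f$, setting $x=x'f$ and taking $P'=\{p/f:p\in P,\,f\in F\}$ gives $\{x,xy,x+p(y):p\in P\}\subseteq S$, and $y\in S$ was already arranged. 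Theorem~\ref{products of sums} enters only inside the proof of Proposition~\ref{central*}, to force the step sizes $y_i$ of the Moreira iteration into prescribed $IP_{n_i}$ sets so that $y=y_{i+1}\cdots y_j$ lands in the central set.

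Your proposal misses the ``syndeticity $\Rightarrow$ finite coloring'' reduction, and tries to do something harder. You want the van der Waerden base point $a_0$ to simultaneously be a product of partial sums from the $S_i$'s, so that $a_0$, $a_0\cdot p(d)$, etc.\ lie in $C$ by Theorem~\ref{products of sums}. But Theorem~\ref{vdw} only asserts the \emph{existence} of some $a_0$ in a piecewise syndetic set with the required translates; it gives no control over the multiplicative shape of $a_0$. Moreover, the case analysis of Claims~\ref{c2} and~\ref{c3} is inherently a two-color argument (it pivots on ``if this is not in $R$ then it is in $B$''); it does not transfer to showing a single fixed set contains the pattern. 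Finally, the sentence ``multiplicative syndeticity forces an additive piecewise syndetic trace'' is not justified: what syndeticity gives you is a finite multiplicative cover, which is exactly why passing to the induced coloring is the right move.
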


Towards this, we will need the following refinement of Moreira's theorem \cite{moreira2017monochromatic}, whose proof is implicit in the proof of \cite[Theorem 3.4]{bowen2025monochromatic}.

\begin{prop}\label{central*}
    For any finite set of polynomials $P\subseteq\pol$ and any finite coloring of $\mathbb{N}$ the set  $$\{y: \exists x \textnormal{ with } \{x,xy,x+p(y):p\in P\} \textnormal{ monochromatic} \} $$

    is (multiplicatively central)$^*$.
\end{prop}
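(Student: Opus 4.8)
The plan is to open up a proof of Moreira's theorem (Theorem \ref{rational.moreira}) and run it so that the step variable $y$ is forced into the prescribed multiplicatively central set. Since a set is (multiplicatively central)$^*$ precisely when it meets every multiplicatively central set, I would fix a multiplicatively central $C$, a finite colouring $\chi\colon\mathbb N\to[r]$, and a finite $P\subseteq\pol$, and aim to produce $y\in C$ together with some $x$ making $\{x,xy,x+p(y):p\in P\}$ monochromatic. The first step is to apply Theorem \ref{products of sums} to $C$, obtaining sets $S_1,S_2,\dots$ with $|S_i|=i$ and $\prod_i\sum_{f\in S_i\cap F}f\in C$ for every finite $F\subseteq\bigcup_iS_i$. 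Two consequences will be used throughout: (i) for each $i$ the additive $IP_i$ set $FS(S_i)$ lies in $C$; and (ii) $C$ is closed under forming products $e_{i_1}\cdots e_{i_m}$ where the $i_j$ are distinct and $e_{i_j}\in FS(S_{i_j})$. The whole argument will be arranged so that the $y$ it produces is such a product.

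The engine is the $IP_n^*$ polynomial van der Waerden theorem (Theorem \ref{vdw}) together with Lemma \ref{basic}. At least one colour class $C_c$ is additively piecewise syndetic, since $\mathbb N$ is and the colour classes partition it. For any additively piecewise syndetic $A$, Theorem \ref{vdw} applied to $A$ and $P$ yields a threshold $n$, and then feeding in $S=S_i$ for any $i\ge n$ produces a step $d\in FS(S_i)\subseteq C$ with $A\cap\bigcap_{p\in P}(A-p(d))$ still piecewise syndetic — so every van der Waerden step can be drawn from the next unused level of the product-of-sums structure. I would then run the colour-focusing argument underlying Moreira's theorem (as in \cite[Theorem 3.4]{bowen2025monochromatic}): over at most $r+1$ rounds, each round applies the above with a fresh large level $i_t$ (giving $e_t\in FS(S_{i_t})$), records the colour of a new dilate of the form $x\cdot e_{i_a}\cdots e_{i_t}$, passes to a piecewise syndetic subset on which that colour is constant, and keeps every earlier constraint alive using the closure of additive piecewise syndeticity under $A\mapsto nA$, $A\mapsto A-n$, intersection with a colour class, and passage to a piecewise syndetic piece after a finite partition (all from Lemma \ref{basic}). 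By pigeonhole two of the recorded colours coincide, and this coincidence lets one close up a monochromatic $\{x,xy,x+p(y):p\in P\}$ whose step $y$ is a product $e_{i_a}e_{i_{a+1}}\cdots e_{i_b}$ of elements drawn one per distinct level; by (ii), $y\in C$.

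The main obstacle is the colour-focusing itself: forcing the multiplicative term $xy$ into the target colour while preserving the additive constraints and, crucially, never reusing a level — a repeated level would introduce a factor such as $e_i^2$, which need not lie in $C$. This is exactly the portion of Moreira's theorem that is only implicit in the proof of \cite[Theorem 3.4]{bowen2025monochromatic}, and the genuinely new point here is the discipline of sourcing every van der Waerden step from a distinct $FS(S_i)$. A secondary issue is the polynomial case: for the linear polynomials $p(y)=iy$ needed in Theorem \ref{t1} the rescaling identity $p(e_{i_a}\cdots e_{i_b})=e_{i_a}\cdots e_{i_c}\cdot p(e_{i_{c+1}}\cdots e_{i_b})$ driving the induction is immediate, whereas for general $P\subseteq\pol$ one has to run the colour-focusing at the level of polynomial progressions rather than single steps — the same adaptation already needed to pass from integral to rational coefficients in Theorem \ref{vdw}. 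Granting this bookkeeping, the construction outputs $y\in C$ and finishes the proof.
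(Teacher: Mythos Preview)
Your proposal is correct and follows essentially the same route as the paper: fix a multiplicatively central $C$, invoke Theorem~\ref{products of sums} to obtain the levels $S_i$ (the paper's $I_i$), and then rerun the Moreira colour-focusing over $r{+}1$ rounds, choosing each successive step $y_i$ via the $IP_n^*$ polynomial van der Waerden theorem from a fresh, sufficiently large level, so that the pigeonholed $y=y_{i+1}\cdots y_j$ is a product of elements from distinct levels and hence lies in $C$. The paper makes the polynomial bookkeeping you flag explicit by carrying evolving families $P_{i+1}=\{\tfrac{y_a\cdots y_b}{y_1\cdots y_{a-1}}\,p: p\in P,\ a\le b\le i\}$ through the induction, which is exactly the ``rescaling at the level of polynomial progressions'' you describe.
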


\begin{proof}
    Let $C\subseteq \mathbb{N}$ be a multiplicatively central set and suppose that we have fixed an $r$-coloring of $\mathbb{N}$ and a finite set $P$ of integral polynomials.  We fix a fast growing sequence $n_i$, depending only on the set $P$, to be specified later. By Theorem \ref{products of sums}, there is a sequence of sets $(I_i)_{i\in \mathbb{N}}$ such that $I_i$ is additively IP$_{n_i}$ and such that for any finite set $F\subset \mathbb{N}$ and elements $a_f\in I_f$ for $f\in F,$ we have $\prod_{f\in F}a_f\in C$.

    We now modify the proof of Moreira's theorem \cite{moreira2017monochromatic}, using Theorem \ref{vdw} to pick the step sizes from appropriate sets $I_f.$ More precisely, we inductively define a sequence of additively piecewise syndetic sets $A_1\supseteq A_2\supseteq\ldots$ integers $y_1,y_2,\ldots$ with $y_1\ldots y_{a-1}|y_a$ for every $a$ and finite sets of polynomials $P_1,P_2,\ldots$ such that

    \begin{itemize}
        \item $A_{i+1}\subseteq A_i\cap\bigcap_{p\in P_i}(A_i-p(y_i)).$

        \item $A_{i+1}\cdot y_1\cdot...\cdot y_i$ is monochromatic.

        \item $y_i\in I_n$ for some sufficiently large $n.$

        \item $P_1=P$ and $P_{i+1}=\{\frac{y_a\cdot\ldots\cdot y_b}{y_1\cdot...\cdot y_{a-1}}p: p\in P, a\leq b\leq i\}.$
    \end{itemize}
    At the $i$th step we use the fact that $I_i$ is IP$_{n_i}$ and use the $IP_n$ van der Waerden Theorem \ref{vdw} to ensure the first and third conditions are satisfied. This can be guaranteed if $n_i$ is chosen sufficiently large.  Finally, the second condition may be satisfied after passing to a subset by the partition regularity and dilation invariance of piecewise syndetic sets, Lemma \ref{basic}.

    We continue $r+1$ many steps. Write $A=A_{r+1}$.
    Now, there are $i<j$ such that the color of $A_r y_1\ldots y_i$ and $A_r y_1\ldots y_j$ are the same. Choose $x'\in A$ and put $x=x'y_1\ldots y_i$ and $y=y_{i+1}\ldots y_j$.
%{\color{red} expand}
\end{proof}

\begin{proof}[Proof of Proposition \ref{s+c}]
    Suppose that $S$ is both multiplicatively syndetic and multiplicatively central. %and $P\subset \mathbb{P}$ is finite.  
    By definition, there is a finite set $F\subset \mathbb{N}$ such that $\mathbb{N}\subset S/F.$  Let $P'=\{p/f: p\in P, f\in F \subseteq\pol \}$.  Consider the coloring $c:\mathbb{N}\rightarrow F$ where $c(n)=f$ if $nf\in S.$  Since $S$ is multiplicatively central, by Proposition \ref{central*} there is a $y\in S$ and $x'\in \mathbb{N}$ such that $\{x', x'y, x'+p'(y): p'\in P'\}$ is monochromatic under $c,$ say in color $f.$  Setting $x=x'f,$ we now see that $\{x,y,xy,x+p(y): p\in P\}\subseteq S.$
\end{proof}

Partition regularity of central sets now implies the following.

\begin{cor}\label{all_syndetic}
    Suppose that $\mathbb{N}=S_1\cup\ldots\cup S_r,$ where each $S_i$ is multiplicatively syndetic.  For any finite set $P$ of integral polynomials without constant coefficients, there is an $i\in [r]$ with $\{x,y,xy,x+p(y):p\in P\}\subseteq S_i.$
\end{cor}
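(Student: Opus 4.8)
The plan is to derive Corollary \ref{all_syndetic} from Proposition \ref{s+c} by means of the partition regularity of multiplicatively central sets. The key point is that in any finite partition $\mathbb{N}=S_1\cup\ldots\cup S_r$, at least one cell is multiplicatively central: fix any minimal idempotent ultrafilter $e\in(\beta\mathbb{N},\cdot)$, and since $\mathbb{N}\in e$ and $e$ is an ultrafilter, exactly one $S_i$ lies in $e$, making that $S_i$ multiplicatively central by definition.

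Next I would observe that we may in addition assume this same $S_i$ is multiplicatively syndetic. This is immediate from the hypothesis of the corollary, which asserts that \emph{every} $S_j$ is multiplicatively syndetic. So the cell $S_i$ singled out by the ultrafilter $e$ in the previous paragraph is simultaneously multiplicatively syndetic and multiplicatively central.

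Finally, I would simply apply Proposition \ref{s+c} to $S_i$: since $S_i$ is both multiplicatively syndetic and multiplicatively central, and $P$ is a finite set of integral polynomials without constant coefficient, Proposition \ref{s+c} yields $x,y\in\mathbb{N}$ with $\{x,y,xy,x+p(y):p\in P\}\subseteq S_i$, as required.

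There is essentially no obstacle here; the work has all been done in Proposition \ref{s+c}, and the only ingredient beyond it is the elementary fact that some partition cell belongs to a fixed minimal idempotent ultrafilter. One should only be mildly careful to phrase things so that the syndeticity and centrality are witnessed by the \emph{same} cell — but this is automatic since syndeticity is assumed for all cells. Concretely, the proof I would write is:

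\begin{proof}
Fix a minimal idempotent ultrafilter $e\in(\beta\mathbb{N},\cdot)$. Since $\mathbb{N}=S_1\cup\ldots\cup S_r\in e$ and $e$ is an ultrafilter, there is some $i\in[r]$ with $S_i\in e$; this $S_i$ is then multiplicatively central by definition. By hypothesis $S_i$ is also multiplicatively syndetic. Proposition \ref{s+c} applied to $S_i$ now yields $x,y\in\mathbb{N}$ with $\{x,y,xy,x+p(y):p\in P\}\subseteq S_i$.
\end{proof}
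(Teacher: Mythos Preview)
Your proof is correct and is precisely the argument the paper sketches: pick a cell that is multiplicatively central by partition regularity of central sets, note it is syndetic by hypothesis, and apply Proposition \ref{s+c}. The paper also records a second, more direct proof bypassing Proposition \ref{s+c} via Theorem \ref{rational.moreira}, but your approach matches the paper's primary derivation.
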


Alternatively, if one is only interested in Corollary \ref{all_syndetic} rather than Proposition \ref{s+c}, then we can deduce it more directly by using the same argument from  \cite[Claim 4.1]{bowen.sabok}.  We repeat the details here for the reader's convenience.

\begin{proof}[Second proof of Corollary \ref{all_syndetic}]
    Fix a finite set $P\subseteq \pol.$ By definition, for each $i$ there is a finite set $F_i\subset \mathbb{N}$ such that $S_i/F_i\supseteq \mathbb{N}.$  Let $k=\max_{i\leq r}|F_i|$ and consider the coloring $c:\mathbb{N}\rightarrow [k]^r$ where an $n\in\mathbb{N}$ is colored based on the $r$-tuple listing the minimal $f_i\in F_i$ such that $f_in\in S_i.$  Let $P'=\{p/f: p\in P, f\in \bigcup_{i\leq r}F_i\}$.  By Theorem \ref{rational.moreira}, there are integers $x',y$ such that $S=\{x', x'y, x'+p'(y): p'\in P'\}$ is monochromatic according to the coloring $c.$  Now if $y\in S_i$, by the definition of $c$ there is an $f_i$ such that $f_iS\subseteq S_i.$  Consequently, setting $x=x'f_i$ is as desired. 
\end{proof}

%\iffalse 

%{\color{red} Moreira's theorem technically doesn't allow for polynomials with coefficients in $\mathbb{Q}$ as proven, but it is extremely easy to modify the proof to account for this.  However, this is essentially what Proposition \ref{s+c} is doing already (while also keeping track of the fact that $y$ comes from a central set).  How much do we want to go into this?}

%\fi

Notice that the above proof works for any finite number of colors so long as all of the color classes are multiplicatively syndetic.  If we were able to prove a similar result assuming that all color classes are multiplicatively thick, then we could most likely combine these arguments using a strategy similar to that in \cite{bowen.sabok} to obtain a full proof of Conjecture \ref{hindman conjecture}.  Unfortunately, we are only able to show this for partitions into two thick sets.

\begin{lemma}\label{all thick}
    Suppose $k\in\mathbb{N}$ and $\mathbb{N}=R\cup B$ with both $R$ and $B$  multiplicatively thick. Then one of the colors contains a set $\{x,y,xy,x+iy:i\leq k\}$.
\end{lemma}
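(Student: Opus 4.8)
\textbf{Proof plan for Lemma \ref{all thick}.}

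The plan is to exploit multiplicative thickness of both color classes by building a long geometric-type progression inside one color and then using additive thickness-like consequences to close up the configuration. Fix $k$ and set $P=\{\alpha x^j:\alpha\in\{a/b:a,b\le 2k+1\},\ j\le K\}$ as in the first proof, so that $P(d)$ has the relevant multiplicative structure. Since $R$ is multiplicatively thick, by Lemma \ref{basic} (third bullet) the set of $d$ with $\{d,d^2,\ldots,d^K,2d,\ldots,(2k+1)d^K,\ldots\}\subseteq R$ \emph{fails} — that is, we cannot simply find all of $P(d)$ inside one color directly — so instead I would argue as in Claim \ref{c3}: starting from an arbitrary element and iterating, either we find a ``color change going up the powers of $d$'' that is exactly the configuration of Claim \ref{c2} (which would finish the proof just as it did there), or after passing to a power $d^n$ and then multiplying by $1,2,\ldots,2k+1$ in turn, we get that $\{ie^j:i\le 2k+1,\ j\le 2\}$ is monochromatic for some $e\in P(d)$, say red. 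The key point is that Claims \ref{c2} and \ref{c3} only used that both colors contained long polynomial progressions with a common step $d$ and some case analysis, so the real content here is to re-derive the ``common step size'' input, Claim \ref{common step size}, from thickness rather than from piecewise syndeticity.

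For that re-derivation I would observe the following: if both $R$ and $B$ are multiplicatively thick, then in particular each is nonempty and, more usefully, for any finite $F$ we can find $t\in R$ with $tF\subseteq R$ and $s\in B$ with $sF\subseteq B$. To get a \emph{common additive step size} I would instead lean on the fourth bullet of Lemma \ref{basic}: multiplicative thickness is preserved under intersection with additively $IP_n^*$ sets. Combined with the $IP_n^*$ polynomial van der Waerden Theorem \ref{vdw} applied to a large additively piecewise syndetic set (e.g.\ all of $\mathbb{N}$, or a dilate of a color class — one of $R,B$ is additively piecewise syndetic, else by Claim \ref{c1} we are already done), this yields a step size $d$ and $a_0\in R$, $a_1\in B$ with $\{a_0+p(d):p\in P\}\subseteq R$ and $\{a_1+p(d):p\in P\}\subseteq B$ \emph{and} with $d$ lying in a multiplicatively thick subset of each color, hence with $P(d)$ ``controllable'' multiplicatively. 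Actually the cleanest route may be to reduce entirely to the first proof: if one color, say $R$, is not additively piecewise syndetic, Claim \ref{c1} finishes; otherwise both are additively piecewise syndetic and the \emph{entire first proof of Theorem \ref{t1} applies verbatim}, so Lemma \ref{all thick} would just be a special case. I expect the author's intended proof, however, to be the more self-contained one that illustrates the thick-side dichotomy, so I would present it that way, using Theorem \ref{vdw} and Lemma \ref{basic} to secure the common step $d$ inside a multiplicatively thick piece of each color, and then running the Claim \ref{c2}/Claim \ref{c3} dichotomy.

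The main obstacle is genuinely step (2): making the multiplicative structure and the common additive step size coexist. In the first proof, piecewise syndeticity of the color classes both provided the common step (via Theorem \ref{vdw}) and was preserved under dilation and translation (Lemma \ref{basic}, first bullet), which is what powered the three-case analysis in Claim \ref{c2}. Here we have thickness instead, and while thickness is better behaved multiplicatively (third/fourth bullets of Lemma \ref{basic}), one must check that after restricting to a thick multiplicative subset we can still invoke van der Waerden to get the polynomial progressions — this is where the fourth bullet (thick $\cap$ $IP_n^*$ is thick) is essential, since $\{d:\ R\cap\bigcap_{p\in P}(R-p(d))\ne\emptyset\}$ is $IP^*$ (equivalently $IP_n^*$ for some $n$), so intersecting with the multiplicatively thick set where $d$ behaves well is still nonempty. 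I would also need to be slightly careful that ``$R$ is multiplicatively thick'' lets me run Claim \ref{c3}'s iteration $2k+1$ times: each stage multiplies by a fixed small constant, and thickness of $R$ guarantees that at every stage the relevant dilated-and-shifted powers stay in $R$ unless a Claim \ref{c2} configuration appears — so $K$ must be taken large enough (depending on $k$) to accommodate all $2k+1$ rounds of the ``pass to a power'' operation, exactly as in the first proof. Beyond bookkeeping of constants, I do not anticipate a serious difficulty once the common-step-size lemma is in hand.
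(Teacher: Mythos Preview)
Your observation that Lemma \ref{all thick} is logically a special case of Theorem \ref{t1}, and hence follows verbatim from the first proof (Claims \ref{c1}, \ref{common step size}, \ref{c2}, \ref{c3}), is correct. However, this entirely misses the point: Lemma \ref{all thick} is a component of the \emph{second} proof of Theorem \ref{t1}, so invoking the first proof's machinery here would make that second proof parasitic on the first rather than genuinely independent. Your attempt at a ``more self-contained'' version does not escape this --- it still runs the Claim \ref{c2}/Claim \ref{c3} dichotomy and the common-step-size argument, with multiplicative thickness bolted on only to reassure yourself that the iteration can proceed. Nothing in your plan actually \emph{uses} the hypothesis that both colors are multiplicatively thick in an essential way.

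The paper's proof is completely different and is a Moreira-style iterative construction that exploits multiplicative thickness directly. Assuming for contradiction that no monochromatic configuration exists, and that (say) $R$ is additively piecewise syndetic, one builds a decreasing chain $R_0\supseteq\cdots\supseteq R_N\subseteq R$ of piecewise syndetic sets together with elements $t_1,\ldots,t_N\in B$ satisfying three conditions: all subproducts $t_j\cdots t_i$ lie in $B$; every $a\in R_i$ has $a\,t_1\cdots t_i\in R$; and $R_{i+1}$ is contained in many rational translates of $R_i$. Multiplicative thickness of $B$ is used at each step to choose $t_{i+1}$ (via the third and fourth bullets of Lemma \ref{basic} combined with Theorem \ref{vdw}), and the second condition requires a small contradiction argument that already produces a monochromatic pattern if it fails. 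The final contradiction then uses multiplicative thickness of $R$ to find an $s\in R$ with many products $t_1\cdots t_i s$ and $t_1^2\cdots t_i^2 s$ in $R$, after which pigeonhole on the indices $r_i\in[k]$ forcing $a\,t_1\cdots t_i+r_i t_1\cdots t_i s\in B$ yields $x,y$ with $\{x,y,xy,x+ly:l\le k\}\subseteq B$. None of this structure appears in your plan.
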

\begin{proof}
Suppose for the sake of contradiction that the lemma is false.
    Without loss of generality assume that $R$ is additively piecewise syndetic. Choose $N\in\mathbb{N}$ large enough depending on $k$.

    By induction, we construct a decreasing sequence $R_0,\ldots R_N\subseteq R$ of additively piecewise syndetic sets and a sequence $t_1,\ldots,t_N\in B$. 

    Along the way, we make sure that: 
    \begin{enumerate}
        \item\label{one}  $t_j\cdot\ldots\cdot t_i\in B$ for all $j\leq i$,
        \item\label{one.and.half} $\forall a\in R_i,\ a t_1\cdot\ldots \cdot t_i\in R$,
        \item\label{three} $\forall a\in R_{i+1},\ n<i,\ l\leq k, \ a  + \frac{lt_{n+1}\cdot\ldots\cdot t_{i+1}}{t_1\cdot\ldots t_n}\in R_i$.
    \end{enumerate}
%    $t_i\cdot\ldots\cdot t_j\in C_1$ for all $i\leq j$, using $q$.

    Before explaining the construction, let us explain why we can use it to produce a contradiction.  Let $a\in R_N$.  Since $R$ is multiplicatively thick, there is an $s\in R$ such that for all $i\leq N$ we have $t_1\cdot\ldots\cdot t_is\in R$ and  $t_1^2\cdot\ldots\cdot t_i^2s\in R$.  Note that this implies that for each $i\leq N$ there is $r_i\leq k$ such that $at_1\cdot\ldots\cdot t_i+r_it_1\cdot\ldots\cdot t_is\in B,$ or else we could set $x=at_1\cdot\ldots\cdot t_i\in R$ and $y=t_1\cdot\ldots\cdot t_is\in R$ and obtain a contradiction.  Since each $r_i\in [k],$ by the pigeonhole principle there are $i<j$ with $r=r_i=r_j$.  Set $x=at_1\cdot\ldots\cdot t_i+rt_1\cdot\ldots\cdot t_is$ and $y=t_{i+1}+\ldots t_j.$  We claim that $\{x,y,xy,x+ly: l\leq k\}\subseteq B$, which is a contradiction.  To see this, note that $x\in B$ by our choice of $r=r_i$ and $y\in B$ by condition (\ref{one}).  To see why $x+ly\in B,$ observe that $a+\frac{lt_{i+1}\cdot\ldots\cdot t_j}{t_1\cdot\ldots\cdot t_i}\in R_{j-1}\subseteq R_i$ by condition (\ref{three}), hence $x+ly\in B$ by (\ref{one.and.half}).  Finally, $xy=at_1\cdot\ldots\cdot t_j+rt_1\cdot\ldots\cdot t_js\in B$ by our choice of $r=r_j$. 

    Therefore, it suffices to construct the desired sequences satisfying (\ref{one})--(\ref{three}).  Towards this, assume that we have already constructed $R_0,\ldots,R_i$ and $t_1,\ldots,t_i$ satisfying (\ref{one})--(\ref{three}).  First, note that since $B$ is multiplicatively thick, the set $T=\{t:\forall j\leq i, \ t_j\cdot\ldots\cdot t_i\cdot t\in B\}$ is multiplicatively thick. For $t\in \mathbb{N}$ put
    $$R_t=\{a\in R_i: \forall j\leq i,\ l\leq k, \ a  + \frac{lt_{j+1}\cdot\ldots\cdot t_{i}t}{t_1\cdot\ldots t_j}\in R_i\}.$$
    Further, since $R_i$ is additively piecewise syndetic, by Theorem \ref{vdw}, we know 
%    \begin{equation*}
%    \begin{split}
   $$T'=\{t\in \mathbb{N}: R_t \mbox{ is additively piecewise syndetic}\}$$    
%    \end{split}
%    \end{equation*}
%    \begin{equation*}
%    \begin{split}
%    T'=\{t: \exists R_t \subseteq R_i\ \mbox{additively piecewise syndetic}\\ \forall a\in R_t,\ n\leq i,\ l\leq k, \ a  + \frac{lt_{n+1}\cdot\ldots\cdot t_{i}t}{t_1\cdot\ldots t_n}\in R_i\}    
%    \end{split}
%    \end{equation*}
    is additively $IP_n^*$ for some $n.$  Consequently, by Lemma \ref{basic}, we know that $T\cap T'$ is multiplicatively thick.  Observe that if we choose $t_{i+1}\in T\cap T'$ then conditions (\ref{one}) and (\ref{three}) are satisfied.  
    
    We claim that there is $t_{i+1}\in T\cap T'$ such that there is an additively piecewise syndetic subset $R_{i+1}\subseteq R_t$ with $R_{i+1}t_1\cdot\ldots\cdot t_{i+1}\subset R$, so that condition (\ref{one.and.half}) is satisfied.  To see this, suppose for the sake of contradiction that for all $t\in T\cap T'$ and all piecewise syndetic subsets $R_t'\subset R_t,$ we have that $R_t't_1\cdot\ldots\cdot t_i\cdot t\cap R$ is not piecewise syndetic.  Choose any $t\in T\cap T'$ and a piecewise syndetic $R_{t}'\subset R_t$ such that $B'=R_t't_1\cdot\ldots 
    \cdot t_i\cdot t\subseteq B.$  Since $B'$ is also additively piecewise syndetic and $T'\cap T$ is multiplicatively thick, by Theorem \ref{vdw} and Lemma \ref{basic} there is  $t'\in T\cap T'$ such that $t\cdot t'\in T\cap T'$ and $B''=B'\cap\bigcap_{l\leq (t_1\cdot\ldots\cdot t_i\cdot t)^2k}(B'-lt')$ is piecewise syndetic.  Now observe that $B''/(t_1\cdot\ldots\cdot t_i\cdot t)\subseteq R_{tt'},$ and hence by assumption $B''t'\subseteq B.$  Choosing $x\in B''$ and $y=t',$ we see that $\{x,y,xy,x+ly:l\leq k\}\subseteq B,$ which contradicts our assumption that no such set was monochromatic.

\end{proof}

Finally, we can complete the second proof of Theorem \ref{t1}.  Note that in any two-coloring of $\mathbb{N}$ either both color classes are multiplicatively thick, or both colors are multiplicatively syndetic, or one color is both thick and syndetic.  We may now finish the proof by applying Lemma \ref{all thick} in the first case and Lemma \ref{s+c} in the latter two cases. 

\end{proof}

\section{The pattern $\{x,y, x^y, xy^i: i\leq k\}$}\label{exp}

In this section we prove Theorem \ref{t2}.  Although many of the details are different, our high level strategy is very similar to the one from our second proof of Theorem \ref{t1} in Subsection \ref{sec:3.2}.  In particular, we give separate proofs dealing with the case when all color classes are thick and the case when all classes are syndetic and show that they suffice to deduce Theorem \ref{t2}.  

Before beginning we first make the following reduction, which was implicit in Sahasrabudhe's proof of the exponential Schur theorem \cite{sahasrabudhe2018monochromatic}.

\begin{lemma}\label{log reduction}
    If any finite coloring of $\mathbb{N}$ contains a monochromatic set $\{x,y, x2^y, x+iy: i\leq k\}$ then any finite coloring contains a monochromatic set of the form $\{x,y,x^y,xy^i: i\leq k\}.$
\end{lemma}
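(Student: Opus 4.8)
The plan is to pass from the multiplicative configuration $\{x,y,x^y,xy^i:i\leq k\}$ to the additive-exponential configuration $\{x,y,x2^y,x+iy:i\leq k\}$ by taking logarithms in base $2$. Concretely, suppose we are given a finite coloring $c$ of $\mathbb{N}$. I would first set up an auxiliary coloring on a set of exponents: to an integer $n$ we want to associate, roughly, the pair consisting of the color of $2^n$ and the color of $n$ itself (or rather $2^n$ again in the appropriate slot), so that a monochromatic additive pattern in the exponents pulls back to a monochromatic multiplicative pattern. The key identity is that if we write $x = 2^a$ and $y = b$, then $x^y = 2^{ab}$ and $xy^i = 2^a b^i$; these are not simultaneously powers of $2$ unless $b$ is a power of $2$, which is why the naive logarithm does not immediately work and why we must instead arrange the configuration $\{x,y,x2^y,x+iy\}$ in the exponents.

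The second step is the heart of the reduction: I would look for the pattern $\{a, b, a2^b, a+ib : i\leq k\}$ monochromatic under a suitable coloring, and then exponentiate. Given such $a,b$, set $X = 2^a$, $Y = 2^b$. Then $X^Y = 2^{a\cdot 2^b}$, and $X Y^i = 2^a (2^b)^i = 2^{a + ib}$. So the multiplicative configuration $\{X, Y, X^Y, XY^i : i\leq k\} = \{2^a, 2^b, 2^{a2^b}, 2^{a+ib} : i \leq k\}$ is precisely the image under $n\mapsto 2^n$ of the additive-exponential configuration $\{a, b, a2^b, a+ib : i\leq k\}$. Therefore, if we define a coloring $c'$ of $\mathbb{N}$ by $c'(n) = c(2^n)$, then a $c'$-monochromatic copy of $\{a,b,a2^b,a+ib:i\leq k\}$ yields a $c$-monochromatic copy of $\{X,Y,X^Y,XY^i:i\leq k\}$. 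Applying the hypothesis to the finite coloring $c'$ finishes the argument.

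The main obstacle — and the only real subtlety — is making sure every element of the target configuration genuinely lands in the exponent set where $c'$ is defined and that the map $n\mapsto 2^n$ is injective on the configuration so that distinctness is preserved where required; both are automatic here since $n\mapsto 2^n$ is injective on $\mathbb{N}$ and $c'$ is defined on all of $\mathbb{N}$. One should also double-check the edge behavior: the hypothesis is stated for all finite colorings and all $k$, so no growth or size restriction on $a,b$ is needed. Thus the proof is essentially a one-line substitution once the correspondence between the two configurations is spelled out, and I would present it as such, emphasizing the identity $2^a(2^b)^i = 2^{a+ib}$ and $(2^a)^{2^b} = 2^{a2^b}$ as the crux.
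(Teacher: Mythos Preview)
Your proposal is correct and follows exactly the paper's approach: define $c'(n)=c(2^n)$, apply the hypothesis to $c'$ to obtain a $c'$-monochromatic $\{a,b,a2^b,a+ib:i\leq k\}$, and then set $x=2^a$, $y=2^b$. The exploratory first paragraph is unnecessary---the whole argument is the substitution you write out cleanly in the second paragraph.
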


\begin{proof}
    Given a coloring $c:\mathbb{N}\rightarrow [r]$ consider the coloring $c':\mathbb{N}\rightarrow [r]$ where $c'(n)=c(2^n).$  If $\{x', y', x'2^{y'}, x'+iy': i\leq k\}$ is $c'$-monochromatic then for $x=2^{x'}$ and $y=2^{y'}$ the set $\{x,y,x^y,xy^i: i\leq k\}$ is $c$-monochromatic.
\end{proof}

We now move on to the proof of Theorem \ref{t2}.

Recall from the previous section that Bergelson and Moreira \cite{bergelson2018measure} have shown that any multiplicatively syndetic set $S$ contains subsets of the form $\{x+y, xy\},$ and that Proposition \ref{s+c} strengthens this to ensure that if $S$ is also multiplicatively central then it contains the pattern $\{x,y,xy,x+y\}.$ Call a natural number $n$ \textit{multiplicatively odd} if the sum of the exponents in the prime factorization of $n$ is odd. Note that the multiplicatively odd numbers are multiplicatively syndetic and do not contain the pattern $\{x,y, xy\}.$  In particular, one cannot drop the assumption in Proposition \ref{s+c} that the set $S$ is central, and it is unclear what weakening of this assumption would still allow the result to hold.

In contrast, we now prove the following:

\begin{prop}\label{syndetic}
    If $S\subseteq \mathbb{N}$ is multiplicatively syndetic then for any finite $Q\subset\mathbb{Q}$ it contains subsets $\{x,y,x2^y,x+qy: q\in Q\}.$
\end{prop}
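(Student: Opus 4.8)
The plan is to mimic the structure of the second proof of Corollary \ref{all_syndetic}, reducing to a statement about multiplicatively central sets and then applying an exponential analogue of Moreira's theorem. Concretely, since $S$ is multiplicatively syndetic, there is a finite $F\subset\mathbb{N}$ with $\mathbb{N}=S/F$. I would color $n\in\mathbb{N}$ by the least $f\in F$ with $fn\in S$, giving a finite coloring $c:\mathbb{N}\to F$. The key point is that every color class of a finite coloring must contain a multiplicatively central set (some color class lies in a minimal idempotent of $(\beta\mathbb{N},\cdot)$), so one of the sets $\{n:c(n)=f\}$ is multiplicatively central; call it $C$ and let $f_0$ be the corresponding element of $F$. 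It then suffices to find $x',y$ inside a multiplicatively central set $C$ with $\{x',y,x'2^y,x'+qy:q\in Q\}$ monochromatic for $c$ — actually we want $x',x'2^y,x'+qy$ all equal to $C$-members while $y\in C$ — and then set $x=f_0x'$; multiplying through by $f_0$ and using $c\equiv f_0$ on this configuration gives $\{x,y,xy\cdot\text{(stuff)},\dots\}\subseteq S$. Here I must be a little careful: multiplying $x'2^y$ by $f_0$ gives $f_0x'2^y=x2^y$, and $f_0(x'+qy)=x+f_0qy$, so I should really run the inner argument with the rescaled set $Q':=\{f_0q:q\in Q\}$ (or, as in the proof of \ref{all_syndetic}, with $Q':=\{q/f:q\in Q,f\in F\}\subset\mathbb{Q}$, choosing $Q$ to already be rational-flexible) so that after multiplication we recover exactly $\{x,y,x2^y,x+qy:q\in Q\}\subseteq S$.

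Thus the heart of the matter is an exponential version of Proposition \ref{central*}: for any finite $Q\subset\mathbb{Q}$ and any finite coloring of $\mathbb{N}$, the set $\{y:\exists x\text{ with }\{x,x2^y,x+qy:q\in Q\}\text{ monochromatic}\}$ is $(\text{multiplicatively central})^*$. I would prove this by imitating the proof of Proposition \ref{central*} given above, replacing the multiplicative van der Waerden step that produces $x\mapsto xy$ with an additive-to-multiplicative step that produces $x\mapsto x2^y$. The mechanism: run the inductive construction of nested additively piecewise syndetic sets $A_1\supseteq A_2\supseteq\cdots$, integers $y_1,y_2,\ldots$, and finite rational-polynomial sets $P_i$, but this time arrange that $A_{i+1}\cdot 2^{y_1+\cdots+y_i}$ is monochromatic (rather than $A_{i+1}\cdot y_1\cdots y_i$), and use Theorem \ref{vdw} to choose the step sizes $y_i$ from the IP$_{n_i}$ sets $I_i$ supplied by Theorem \ref{products of sums} applied to $2$-powers — i.e. one needs that a multiplicatively central set contains $\prod 2^{\sum_{f\in S_i\cap F}f}=2^{\sum\sum}$, so one should apply Theorem \ref{products of sums} and then exponentiate. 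After $r+1$ steps, pigeonhole gives $i<j$ with $A_ry_1\cdots 2^{y_1+\cdots+y_i}$ and $A_r\cdot 2^{y_1+\cdots+y_j}$ the same color; setting $x=x'2^{y_1+\cdots+y_i}$ and $y=y_{i+1}+\cdots+y_j$ yields $x,x2^y$ the same color, while the polynomial-van-der-Waerden bookkeeping on the additive side (which is unchanged) delivers $x+qy$ in that color for all $q\in Q$, and $y$ lies in the central set $C$ by construction since $y$ is a sum of elements chosen from the $I_i$'s.

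The main obstacle I anticipate is precisely the interface between the additive and multiplicative structures in this hybrid configuration. In Proposition \ref{central*} both the "$xy$" relation and the "$x+p(y)$" relations are handled by a single additive van der Waerden argument after a multiplicative dilation, and the dilations $y_1\cdots y_i$ commute cleanly with everything. Here the term $x2^y$ involves an \emph{exponential} of the additive variable $y$, so when we dilate $A_{i+1}$ by $2^{y_1+\cdots+y_i}$ we must check that the additive progressions $a+p(y)$ inside $A_{i+1}$ survive multiplication by this power of $2$ in the right form — this is where dilation-invariance of piecewise syndeticity (Lemma \ref{basic}) is used, and where I need the polynomial families $P_i$ to absorb the factors $2^{y_1+\cdots+y_i}$, analogously to how $P_{i+1}$ absorbs $y_a\cdots y_b/(y_1\cdots y_{a-1})$ in the proof above. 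I expect this to go through but it is the step requiring the most care. A secondary, more routine point is confirming that Theorem \ref{products of sums} can be post-composed with $n\mapsto 2^n$ to land inside a prescribed central set, which is immediate since $\prod_i 2^{a_i}=2^{\sum_i a_i}$ and the $\sum_{f\in S_i\cap F}f$ range over the appropriate FS-sets.
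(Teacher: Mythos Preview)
Your proposal has a genuine gap, in two places.

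\textbf{First, and more fundamentally:} even if you succeed in placing $y$ in a multiplicatively central colour class $C=\{n:c(n)=f_0\}$, this only gives $f_0y\in S$, not $y\in S$. Multiplying the configuration $\{x',x'2^y,x'+q'y\}$ through by $f_0$ puts $x,x2^y,x+qy$ into $S$, but $y$ is not multiplied by anything, and the conclusion of the proposition requires $y\in S$ itself. This is precisely why Proposition~\ref{s+c} needs the extra hypothesis that $S$ is central: there one can take $y\in S$ directly because $S$ is one of the sets hit by the (multiplicatively central)$^*$ conclusion. Here $S$ is only syndetic, and you have no control over which colour class of $c$ is central.

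\textbf{Second:} your proposed proof that the good $y$'s are (multiplicatively central)$^*$ does not go through. In the adapted construction the step is $y=y_{i+1}+\cdots+y_j$, a \emph{sum}, because the dilations are by $2^{y_1+\cdots+y_i}$. Theorem~\ref{products of sums} puts into $C$ products of sums taken \emph{within} each $S_i$; with $y_a\in S_a$ for distinct $a$ it yields $y_{i+1}\cdots y_j\in C$, not $y_{i+1}+\cdots+y_j\in C$. You cannot rescue this by drawing all $y_i$ from a single $S_n$, since the van der Waerden bound $n_i$ at stage $i$ depends on the polynomial family $Q_i=\{q/2^{y_a+\cdots+y_b}:q\in Q,\ a\le b<i\}$, which in turn depends on the previously chosen $y_1,\ldots,y_{i-1}$; the required size of $S_n$ cannot be fixed in advance.

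The paper avoids both issues by switching to the additive category. It shows (Proposition~\ref{IP*}) that the set of good $y$'s is \emph{additively} IP$^*$, which matches the additive form $y=y_{i+1}+\cdots+y_j$, and then uses that multiplicatively syndetic sets are additively IP (Hindman's theorem plus dilation invariance of IP sets) to obtain $y\in S$ directly, without any centrality assumption.
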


 Unlike the set $y$ from Proposition \ref{central*}, which was controlled by multiplicatively central sets, the set of step sizes $y$ giving monochromatic sets $\{x, x2^y, x+qy: q\in Q\}$ is well understood via purely additive structures.  This is made explicit in the following proposition, which is an exponential version of Moreira's theorem \cite{moreira2017monochromatic}.

 \begin{prop}\label{IP*}
     For any finite $Q\subseteq\mathbb{Q}$ and finite coloring of $\mathbb{N}$, the set $$\{y: \exists x \textnormal{ with } \{x,x2^y, x+qy:q\in Q\} \textnormal{ monochromatic}\}$$

     is additively IP$^*.$
 \end{prop}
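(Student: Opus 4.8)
**Plan for proving Proposition \ref{IP*}.**

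The statement to prove is that for any finite $Q \subseteq \mathbb{Q}$ and any finite coloring of $\mathbb{N}$, the set of $y$ for which some $x$ makes $\{x, x2^y, x+qy : q \in Q\}$ monochromatic is additively $IP^*$. My plan is to adapt the proof of Proposition \ref{central*} (the analogous statement in the multiplicative setting), since that proposition's structure is already a modification of Moreira's theorem, and here the arithmetic is even cleaner because the conclusion we want is a purely additive $IP^*$-ness rather than (multiplicatively central)$^*$-ness. Fix an additively $IP$ set $I = FS(S)$ for an infinite sequence $S$; the goal is to find $y \in I$ that works. As in the proof of Proposition \ref{central*}, I would truncate $S$ to a sufficiently long finite initial segment and build an increasing chain of "building block" sets inside it, then run van der Waerden's theorem to harvest step sizes from those blocks.

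The key steps, in order: First, clear denominators --- replace $Q$ by $NQ \subseteq \mathbb{Z}$ where $N$ is a common denominator, which is harmless since if $\{x, x2^{Ny}, x+q'(Ny) : q'\in NQ\}$ is monochromatic then so is the original configuration with $y$ replaced by $Ny$, and $Ny \in$ any $IP$ set after passing to $I \cap (\text{multiples of }N)$-type considerations (more carefully: we want $y$ itself in $I$, so instead I'd keep $Q$ rational and use the rational polynomial van der Waerden theorem \ref{vdw}, exactly as the paper does elsewhere, choosing step sizes inside $FS$ of a finite chunk of $S$). Second, inductively construct additively piecewise syndetic sets $A_1 \supseteq A_2 \supseteq \cdots$, integers $y_1, y_2, \ldots$ with $y_1 + \cdots + y_{a-1}$ dividing... no --- here the natural bookkeeping is \emph{additive} in the exponent and \emph{multiplicative} in the base: I want $x \mapsto x 2^{y_i}$ to interact well, so the relevant composed configuration after $i$ steps involves sums $y_a + \cdots + y_b$ of the step sizes and the powers $2^{y_a + \cdots + y_b}$. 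Concretely, maintain: $A_{i+1} \subseteq A_i \cap \bigcap_{q \in Q_i}(A_i - q \cdot 2^{y_1 + \cdots + y_i} \cdot \text{stuff})$, that $A_{i+1} \cdot 2^{y_1 + \cdots + y_i}$ is monochromatic (using dilation-invariance by powers of $2$ and partition regularity of piecewise syndetic sets, Lemma \ref{basic}), each $y_i$ lies in $FS$ of a fresh block of $S$ (guaranteed by the $IP_n$ strength of \ref{vdw} once the block is long enough), and $Q_{i+1}$ is the finite set of rationals obtained by composing $Q$ with the scalings forced by earlier steps. Third, run $r+1$ steps where $r$ is the number of colors, find $i < j$ with $A_r 2^{y_1 + \cdots + y_i}$ and $A_r 2^{y_1 + \cdots + y_j}$ the same color, pick $x' \in A_{r+1}$, and set $y = y_{i+1} + \cdots + y_j \in I$, $x = x' 2^{y_1 + \cdots + y_i}$; then $x$ and $x 2^y = x' 2^{y_1+\cdots+y_j}$ are the same color, and the progression condition built into the $A$'s forces $x + qy$ into that color for all $q \in Q$.

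The main obstacle, as in the proof of Proposition \ref{central*}, is getting the bookkeeping of the polynomial van der Waerden applications right: at step $i$ the "step size" $y_i$ enters the later configuration not linearly but through expressions like $q \cdot 2^{y_1 + \cdots + y_{i}}$ where the exponent itself accumulates the $y_j$'s, so one must be careful that the finite polynomial family $Q_i$ fed to Theorem \ref{vdw} at stage $i$ genuinely captures every way a future $x + qy$ term can be rewritten in terms of the current $A_i$ --- this is exactly the role of the recursively defined $P_{i+1}$ in Proposition \ref{central*}, and here the analogous set must also absorb the factors $2^{(\text{sum of earlier }y\text{'s})}$, which are honest integers once the $y$'s are fixed, so there is no genuine new difficulty, just more indices. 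A secondary point to check is that the dilations used (multiplying $A_{i+1}$ by $2^{y_1 + \cdots + y_i}$) preserve additive piecewise syndeticity, which is immediate from the first bullet of Lemma \ref{basic}. Everything else is routine given Theorem \ref{vdw} and Lemma \ref{basic}; I expect the write-up to essentially mirror the proof of Proposition \ref{central*} with "$2^{(\cdot)}$" inserted in the appropriate places.
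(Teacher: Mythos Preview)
Your plan is correct and is essentially the same as the paper's proof: fix an IP set $I$, build a nested sequence of additively piecewise syndetic sets via iterated applications of Theorem \ref{vdw} with step sizes $y_i$ chosen from disjoint blocks of the generating sequence of $I$, arrange that the dilates $2^{y_1+\cdots+y_i}A_{i+1}$ are monochromatic, and finish by pigeonhole on the color to extract $y=y_{i+1}+\cdots+y_j\in I$. One small bookkeeping correction: since $x+qy = 2^{y_1+\cdots+y_i}\bigl(x' + q\,(y_{i+1}+\cdots+y_j)/2^{y_1+\cdots+y_i}\bigr)$, the recursively enlarged set $Q_i$ should consist of the rationals $q/2^{y_a+\cdots+y_b}$ (dividing, not multiplying, by the accumulated power of $2$), exactly as the paper records it.
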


 \begin{proof}
     Fix $r\in\mathbb{N},$ $Q\subseteq \mathbb{Q}$ finite, and suppose that $\mathbb{N}=C_1\cup...\cup C_r$ is a coloring.  Let $I$ be an IP set.

     Without loss of generality we may assume that $C_1$ is additively piecewise syndetic.  We now inductively define a sequence of additively piecewise syndetic sets $C_1=D_1\supseteq D_{1}'\supseteq D_2...\supseteq D_{r+1}$, integers $y_1,...,y_r$, and finite sets  $Q_1,...,Q_r\subseteq \mathbb{Q}$ such that

     \begin{enumerate}
         \item $D'_{i+1} = \bigcap_{q\in Q_i} (D_i-qy_i)$,
         \item $2^{y_1+...+y_i} D_{i+1}$ is monochromatic,  %$D_{i+1}=\{x\in D'_{i+1}: 2^{y_1+...+y_i}\cdot x\in C_c\},$ where $c\in [r]$ is chosen so that $D_{i+1}$ is additively piecewise syndetic,
         \item $Q_i=\{\frac{q}{2^{y_a+...+y_{b}}}: q\in Q \textnormal{ and }1\leq a\leq b\},$

         \item For all $1\leq a<b\leq r,$ $y_a+...+y_b\in I.$
     \end{enumerate}

     Such a sequences are constructed by iterative applications of the IP$^*$ van der Waerden theorem \ref{vdw} and Lemma \ref{basic} and the pigeonhole principle.

     By the pigeonhole principle and Condition (2) of the construction, there must be $i<j$ such that $2^{y_1+...+y_i}D_{i+1}$ and $2^{y_1+...+y_j}D_{j+1}$ are monochromatic in the same color.  Set $y=y_{i+1}+...+y_j,$ which is in $I$ by Condition (4).  Now choose $x'\in D_{j+1}$ and set $x=x'\cdot 2^{y_1+...+y_i}\in 2^{y_1+...+y_i}D_{i+1}$.  As $D_{j+1}\subseteq D'_a\subseteq D_{i+1}$ for all $i+1\leq a\leq j,$ by Conditions (1) and (3) we have that $x'+\frac{\sum_{a=i+1}^jqy_a}{2^{y_1+...+y_i}}\in D_{i+1}$ for all $q\in Q.$  In particular, we have $x+qy\in 2^{y_1+...+y_i}D_{i+1}.$  Finally, by Condition (2) we see that $\{x,x2^y, x+qy: q\in Q\}$ is monochromatic as desired. \end{proof}

 Proposition \ref{syndetic} now follows in much the same way as Proposition \ref{s+c} from the previous section.

\begin{proof}[Proof of Proposition \ref{syndetic}]
    Fix a finite $Q \subset \mathbb{Q}$ and let $S$ be a multiplicatively syndetic set.

    Recall that Hindman's theorem implies that any finite coloring of $\mathbb{N}$ contains a monochromatic additive $IP$ set.  Since additive $IP$ sets are dilation invariant, this implies that multiplicatively syndetic sets are additively $IP.$

    Since $S$ is multiplicatively syndetic, by definition there is a finite set $F\subseteq \mathbb{N}$ such that $\mathbb{N}\subseteq S/F.$  Consider the coloring $c:\mathbb{N}\rightarrow F$ where $c(n)=f$ if $f\in F$ is the minimal number with $nf\in S.$  Set $Q'=\{q/f: q\in Q \textnormal{ and }f\in F\}.$

    Since multiplicatively syndetic sets are additively $IP$, by Proposition \ref{IP*}, there is a $y\in S$ and $x'\in \mathbb{N}$ such that the set $\{x',x'2^y, x'+qy: q\in Q'\}$ is monochromatic according to $c,$ say in color $f\in F.$  Now notice that for $x=x'f$ we get $\{x,y,x2^y,x+qy: q\in Q\}\subseteq S$.  \end{proof}

% Note that the additive IP is overkill here; we really just need the set to be $IP_r$ for some large enough $r$ and also have the property that early elements divide later ones.  That such sets are partition regular has an easy finite proof.  We remark that this is also enough to recover a finite proof of Goswami's result from \cite{goswami2024monochromatic}.

% {\color{red} matt: I think we should either expand on or remove this comment, and currently I would vote for the latter.} {\color{purple} i agree. let's remove it}

   % In light of Proposition \ref{IP*}, when proving Theorem \ref{t2} it suffices to assume both color classes are multiplicatively thick.

   With the syndetic case taken care of, we are in a position to finish the proof of Theorem \ref{t2}.

\begin{proof}[Proof of Theorem \ref{t2}] 

Suppose that we are proving Theorem \ref{t2} for a fixed $k\in \mathbb{N}$ and coloring $\mathbb{N}=R\cup B$.  By Lemma \ref{log reduction}, it suffices to find a monochromatic set $\{x,y,x2^y,x+iy: i\leq k\}$.  By Proposition \ref{syndetic}, we can assume both color classes are multiplicatively thick, and by the same argument as in Claim \ref{c1} we may assume that both colors are additively piecewise syndetic.

    Iteratively apply the $IP_n^*$ van der Waerden theorem \ref{vdw} three times to find two monochromatic progressions of the form 
    $$P_1=a+Q_1d_1+Q_2d_2+Q_3d_3\subseteq R,$$

    $$P_2=b+Q_1d_1+Q_2d_2+Q_3d_3\subseteq B,$$
where $Q_1\subseteq Q_2\subseteq Q_3\subseteq \mathbb{Q}$ are large finite sets and $d_i$ are chosen by thickness to ensure $Q_1d_1\subseteq R,$ $Q_2d_2\subseteq B,$ and $Q_3d_3\subseteq R$.  Here, $Q_1=[k^2]$, $Q_2=\{\frac{i}{j}:i,j\leq k^22^{kd_1}\},$ and $Q_3=\{\frac{i}{j}: i,j\leq k^22^{k(d_1+d_2)}\}$ suffice for our purposes. 

\vspace{2mm}

\textbf{Case 1}. If  $q_1d_1+q_2d_2\in R$ for some choice of $q_1\in [k]$ and $q_2\in [k2^{kd_1}]$, then consider the sub-progression $$\{a, a+i(q_1d_1), a+i(q_1d_1+q_2d_2), (a+j\frac{q_2d_2}{2^{q_1d_1}})+iq_1d_1: i,j\leq k\}\subseteq P_1\subseteq R.$$  Recall that $q_1d_1\in R$ by our choice of $d_1$.  Thus, we can assume that $a2^{q_1d_1}\in B$ or else we could conclude by setting $x=a$ and $y=q_1d_1$ and considering the above sub-progression.  Similarly, we can conclude $(a+j\frac{q_2d_2}{2^{q_1d_1}})2^{q_1d_1}\in B$.  Finally, since $q_1d_1+q_2d_2\in R$ by our assumption, we can also assume that $a2^{q_1d_1+q_2d_2}\in B$.   But now note that for $x=a2^{q_1d_1}$ and $y=q_2d_2$ that $\{x,y,x2^y,x+iy:i\leq k\}\subseteq B$.

\vspace{2mm}

The remaining cases now follow from a very similar analysis.

\vspace{2mm}

\textbf{Case 2}. Assume $q_1d_1+q_2d_2\in B$ for all $q_1\in [k]$ and $q_2\in [k2^{kd_1}]$. 

\vspace{2mm}

\mbox{}\indent \textbf{Subcase 1}. If $(q_1d_1+q_2d_2)+q_3d_3\in B$ for some choice of $q_3\in [k2^{k(d_1+d_2)}]$, then the same argument from Case 1 with $(q_1d_1+q_2d_2)$ taking the place of $q_1d_1,$ $q_3d_3$ taking the place of $q_2d_2,$ and $P_2$ taking the place of $P_1$ produces a monochromatic set $\{x,y,x2^y, x+iy: i\leq k\}$.

\vspace{2mm}

\mbox{}\indent\textbf{Subcase 2}. Assume $(q_1d_1+q_2d_2)+q_3d_3\in R$ for all $q_1\in [k]$ and $q_2\in [k2^{kd_1}],$ and $q_3\in [k2^{k(d_1+d_2})]$. 

\vspace{2mm}

\mbox{}\indent\indent\textbf{Subsubcase 1}. If $q_2 d_2+q_3 d_3\in B$, for some choice of $q_2\in [k2^{kd_1}]$ and $q_3\in [k2^{k(d_1+d_2)}]$ then proceed  as in Case 1, with $q_2d_2$ taking the place of $q_1d_1$, $q_3d_3$ taking the place of $q_2d_2,$ and $P_2$ taking the place of $P_1$.

\vspace{2mm}

\mbox{}\indent\indent\textbf{Subsubcase 2}. Assume $q_2 d_2+q_3 d_3\in R$ for all $q_2\in [k2^{kd_1}]$ and $q_3\in [k2^{k(d_1+d_2)}]$.
 Then for $x=d_2+d_3\in R$ and $y=d_1\in R$, this ensures $x+iy=(id_1+d_2)+d_3\in R$ by the assumption of Subcase 2 and $x2^y=2^{d_1}(d_2+d_3)=2^{d_1} d_2+2^{d_1} d_3\in R$, by the assumption of current Subsubcase 2, as desired.
\end{proof}

\bibliographystyle{amsalpha} 
\bibliography{bib} 

\end{document}